\newtheorem{theorem}{Theorem}
\newtheorem*{remark}{Remark}
\DeclareMathOperator{\tr}{Tr}
\DeclareMathOperator{\E}{\mathbb{E}}
\newcommand\copyrighttext{%
  \footnotesize \textcopyright 2018 IEEE. Personal use of this material is permitted.
  Permission from IEEE must be obtained for all other uses, in any current or future
  media, including reprinting/republishing this material for advertising or promotional
  purposes, creating new collective works, for resale or redistribution to servers or
  lists, or reuse of any copyrighted component of this work in other works.
  DOI: \href{https://doi.org/10.1109/TSP.2018.2866385}{10.1109/TSP.2018.2866385}, IEEE
Transactions on Signal Processing}
\newcommand\copyrightnotice{%
\begin{tikzpicture}[remember picture,overlay]
\node[anchor=south, yshift=3pt] at (current page.south) {\fbox{\parbox{\dimexpr\textwidth-\fboxsep-\fboxrule\relax}{\copyrighttext}}};
\end{tikzpicture}%
}
\begin{document}

\title{Bayesian Cluster Enumeration Criterion for Unsupervised Learning}

\author{Freweyni~K.~Teklehaymanot,~\IEEEmembership{Student~Member,~IEEE,}
        Michael~Muma,~\IEEEmembership{Member,~IEEE,}
        and~Abdelhak~M.~Zoubir,~\IEEEmembership{Fellow,~IEEE}
\thanks{F. K. Teklehaymanot and A. M. Zoubir are with the Signal Processing Group and the Graduate School of Computational Engineering, Technische Universit\"at Darmstadt, Darmstadt, Germany (e-mail: ftekle@spg.tu-darmstadt.de; zoubir@spg.tu-darmstadt.de).}
\thanks{M. Muma is with the Signal Processing Group, Technische Universit\"at Darmstadt, Darmstadt, Germany (e-mail: muma@spg.tu-darmstadt.de).}
}

\maketitle

\begin{abstract}
We derive a new Bayesian Information Criterion (BIC) by formulating the problem of estimating the number of clusters in an observed data set as maximization of the posterior probability of the candidate models. Given that some mild assumptions are satisfied, we provide a general BIC expression for a broad class of data distributions. This serves as a starting point when deriving the BIC for specific distributions. Along this line, we provide a closed-form BIC expression for multivariate Gaussian distributed variables. We show that incorporating the data structure of the clustering problem into the derivation of the BIC results in an expression whose penalty term is different from that of the original BIC. We propose a two-step cluster enumeration algorithm. First, a model-based unsupervised learning algorithm partitions the data according to a given set of candidate models. Subsequently, the number of clusters is determined as the one associated with the model for which the proposed BIC is maximal. The performance of the proposed two-step algorithm is tested using synthetic and real data sets.
\end{abstract}

\copyrightnotice

\begin{IEEEkeywords}
model selection, Bayesian information criterion, cluster enumeration, cluster analysis, unsupervised learning, multivariate Gaussian distribution
\end{IEEEkeywords}

\section{Introduction}
\label{sec:intro}
\IEEEPARstart{S}{tatistical} model selection is concerned with choosing a model that adequately explains the observations from a family of candidate models. Many methods have been proposed in the literature, see for example \cite{jeffreys1961,akaike1969,akaike1970,akaike1973,allen1974,stone1974,rissanen1978,schwarz1978,hannan1979,shabata1980,rao1989,breiman1992,kass21995,shao1996,djuric1998,cavanaugh1999,zoubir21999,zoubir2000,brcich2002,morelande2002,spiegelhalter2002,claeskens2003,lu2013,lu22013,lu2015} and the review in \cite{rao2001}. Model selection problems arise in various applications, such as the estimation of the number of signal components \cite{djuric1998,lu2013,lu22013,lu2015,zoubir2000,morelande2002,brcich2002}, the selection of the number of non-zero regression parameters in regression analysis \cite{akaike1973,claeskens2003,shao1996,allen1974,stone1974,rao1989,breiman1992,spiegelhalter2002}, and the estimation of the number of data clusters in unsupervised learning problems \cite{kalogeratos2012,hamerly2003,pelleg2000,shahbaba2012,ishioka2005,zhao2008,zhao22008,feng2006,constantinopoulos2006,huang2017,fraley1998,mehrjou2016,dasgupta1998,campbell1997,mukherjee1998,krzanowski1988,tibshirani2001,teklehaymanot2016,binder2017}. In this paper, our focus lies on the derivation of a Bayesian model selection criterion for cluster analysis. \\
The estimation of the number of clusters, also called cluster enumeration, has been intensively researched for decades \cite{kalogeratos2012,hamerly2003,pelleg2000,shahbaba2012,ishioka2005,zhao2008,zhao22008,feng2006,constantinopoulos2006,huang2017,fraley1998,mehrjou2016,dasgupta1998,campbell1997,mukherjee1998,krzanowski1988,tibshirani2001,teklehaymanot2016,binder2017} and a popular approach is to apply the Bayesian Information Criterion (BIC) \cite{ishioka2005,fraley1998,zhao2008,zhao22008,pelleg2000,mehrjou2016,dasgupta1998,campbell1997,mukherjee1998,teklehaymanot2016}. The BIC finds the large sample limit of the Bayes' estimator which leads to the selection of a model that is a posteriori most probable. It is consistent if the true data generating model belongs to the family of candidate models under investigation. The BIC was originally derived by Schwarz in \cite{schwarz1978} assuming that (i) the observations are independent and identically distributed (iid), (ii) they arise from an exponential family of distributions, and (iii) the candidate models are linear in parameters. Ignoring these rather restrictive assumptions, the BIC has been used in a much larger scope of model selection problems. A justification of the widespread applicability of the BIC was provided in \cite{cavanaugh1999} by generalizing Schwarz's derivation. In \cite{cavanaugh1999}, the authors drop the first two assumptions made by Schwarz given that some regularity conditions are satisfied.  The BIC is a generic criterion in the sense that it does not incorporate information regarding the specific model selection problem at hand. As a result, it penalizes two structurally different models the same way if they have the same number of unknown parameters. \\
The works in \cite{stoica2004,djuric1998} have shown that model selection rules that penalize for model complexity have to be examined carefully before they are applied to specific model selection problems. Nevertheless, despite the widespread use of the BIC for cluster enumeration \cite{ishioka2005,fraley1998,zhao2008,zhao22008,pelleg2000,mehrjou2016,dasgupta1998,campbell1997,mukherjee1998,teklehaymanot2016}, very little effort has been made to check the appropriateness of the original BIC formulation \cite{cavanaugh1999} for cluster analysis. One noticeable work towards this direction was made in \cite{mehrjou2016} by providing a more accurate approximation to the marginal likelihood for small sample sizes. This derivation was made specifically for mixture models assuming that they are well separated. The resulting expression contains the original BIC term plus some additional terms that are based on the mixing probability and the Fisher Information Matrix (FIM) of each partition. The method proposed in \cite{mehrjou2016} requires the calculation of the FIM for each cluster in each candidate model, which is computationally very expensive and impractical in real world applications with high dimensional data. This greatly limits the applicability of the cluster enumeration method proposed in \cite{mehrjou2016}. Other than the above mentioned work, to the best of our knowledge, no one has thoroughly investigated the derivation of the BIC for cluster analysis using large sample approximations. \\
We derive a new BIC by formulating the problem of estimating the number of partitions (clusters) in an observed data set as maximization of the posterior probability of the candidate models. Under some mild assumptions, we provide a general expression for the BIC, $\text{BIC}_{\mbox{\tiny G}}(\cdot)$, which is applicable to a broad class of data distributions. This serves as a starting point when deriving the BIC for specific data distributions in cluster analysis. Along this line, we simplify $\text{BIC}_{\mbox{\tiny G}}(\cdot)$ by imposing an assumption on the data distribution. A closed-form expression, $\text{BIC}_{\mbox{\tiny N}}(\cdot)$, is derived assuming that the data set is distributed as a multivariate Gaussian. The derived model selection criterion, $\text{BIC}_{\mbox{\tiny N}}(\cdot)$, is based on large sample approximations and it does not require the calculation of the FIM. This renders our criterion computationally cheap and practical compared to the criterion presented in \cite{mehrjou2016}. \\
Standard clustering methods, such as the Expectation-Maximization (EM) and K-means algorithm, can be used to cluster data only when the number of clusters is supplied by the user. To mitigate this shortcoming, we propose a two-step cluster enumeration algorithm which provides a principled way of estimating the number of clusters by utilizing existing clustering algorithms. The proposed two-step algorithm uses a model-based unsupervised learning algorithm to partition the observed data into the number of clusters provided by the candidate model prior to the calculation of $\text{BIC}_{\mbox{\tiny N}}(\cdot)$ for that particular model. We use the EM algorithm, which is a model-based unsupervised learning algorithm, because it is suitable for Gaussian mixture models and this complies with the Gaussianity assumption made by $\text{BIC}_{\mbox{\tiny N}}(\cdot)$. However, the model selection criterion that we propose is more general and can be used as a wrapper around any clustering algorithm, see \cite{xu2005} for a survey of clustering methods. \\
The paper is organized as follows. Section~\ref{sec:probform} formulates the problem of estimating the number of clusters given data. The proposed generic Bayesian cluster enumeration criterion, $\text{BIC}_{\mbox{\tiny G}}(\cdot)$, is introduced in Section~\ref{sec:propcrit}. Section~\ref{sec:gauss} presents the proposed Bayesian cluster enumeration algorithm for multivariate Gaussian data in detail. A brief description of the existing BIC-based cluster enumeration methods is given in Section~\ref{sec:stateoftheart}. Section~\ref{sec:penTerm} provides a comparison of the penalty terms of different cluster enumeration criteria. A detailed performance evaluation of the proposed criterion and comparisons to existing BIC-based cluster enumeration criteria using simulated and real data sets are given in Section~\ref{sec:results}. Finally, concluding remarks are drawn and future directions are briefly discussed in Section~\ref{sec:conc}. A detailed proof is provided in Appendix \ref{app:A}, whereas Appendix \ref{app:B} contains the vector and matrix differentiation rules that we used in the derivations.  \\ 
\textbf{Notation:} Lower- and upper-case boldface letters stand for column vectors and matrices, respectively; Calligraphic letters denote sets with the exception of $\mathcal{L}$ which is used for the likelihood function; $\mathbb{R}$ represents the set of real numbers; $\mathbb{Z}^{+}$ denotes the set of positive integers; Probability density and mass functions are denoted by $f(\cdot)$ and $p(\cdot)$, respectively; $\bm{x}\sim\mathcal{N}(\bm{\mu},\bm{\Sigma})$ represents a Gaussian distributed random variable $\bm{x}$ with mean $\bm{\mu}$ and covariance matrix $\bm{\Sigma}$; $\hat{\bm{\theta}}$ stands for the estimator (or estimate) of the parameter $\bm{\theta}$; $\log$ denotes the natural logarithm; iid stands for independent and identically distributed; \textbf{(A.)} denotes an assumption, for example \textbf{(A.1)} stands for the first assumption; $\mathcal{O}(1)$ represents Landau's term which tends to a constant as the data size goes to infinity; $\bm{I}_r$ stands for an $r\times r$ identity matrix; $\bm{0}_{r\times r}$ denotes an $r\times r$ all zero matrix; $\#\mathcal{X}$ represents the cardinality of the set $\mathcal{X}$; $^\top$ stands for vector or matrix transpose; $|\bm{Y}|$ denotes the determinant of the matrix $\bm{Y}$; $\tr (\cdot)$ represents the trace of a matrix; $\otimes$ denotes the Kronecker product; $\text{vec}(\bm{Y})$ refers to the staking of the columns of an arbitrary matrix $\bm{Y}$ into one long column vector.  

\section{Problem Formulation}
\label{sec:probform}
Given a set of $r$-dimensional vectors $\mathcal{X}\triangleq \{\bm{x}_1,\ldots,\bm{x}_N\}$, let $\{\mathcal{X}_1,\ldots,\mathcal{X}_K\}$ be a partition of $\mathcal{X}$ into $K$ clusters $\mathcal{X}_k\subseteq\mathcal{X}$ for $k\in\mathcal{K}\triangleq\{1,\ldots,K\}$. The subsets (clusters) $\mathcal{X}_k, k\in\mathcal{K},$ are independent, mutually exclusive, and non-empty. Let $\mathcal{M}\triangleq \{M_{L_\mathrm{min}},\ldots,M_{L_\mathrm{max}}\}$ be a family of candidate models that represent a partitioning of $\mathcal{X}$ into $l=L_\mathrm{min},\ldots,L_\mathrm{max}$ subsets, where $l\in\mathbb{Z}^{+}$.
The parameters of each model $M_l\in\mathcal{M}$ are denoted by $\bm{\Theta}_l=\left[\bm{\theta}_1,\ldots,\bm{\theta}_l\right]$ which lies in a parameter space $\Omega_l\subset\mathbb{R}^{q\times l}$. Let $f(\mathcal{X}|M_l,\bm{\Theta}_l)$ denote the probability density function (pdf) of the observation set $\mathcal{X}$ given the candidate model $M_l$ and its associated parameter matrix $\bm{\Theta}_l$. Let $p(M_l)$ be the discrete prior of the model $M_l$ over the set of candidate models $\mathcal{M}$ and let $f(\bm{\Theta}_l|M_l)$ denote a prior on the parameter vectors in $\bm{\Theta}_l$ given $M_l\in\mathcal{M}$. \\
According to Bayes' theorem, the joint posterior density of $M_l$ and $\bm{\Theta}_l$ given the observed data set $\mathcal{X}$ is given by
\begin{equation}
 f(M_l,\bm{\Theta}_l|\mathcal{X}) = \frac{p(M_l)f(\bm{\Theta}_l|M_l)f(\mathcal{X}|M_l,\bm{\Theta}_l)}{f(\mathcal{X})},
\end{equation}
where $f(\mathcal{X})$ is the pdf of $\mathcal{X}$. Our objective is to choose the candidate model $M_{\hat{K}}\in\mathcal{M}$, where $\hat{K}\in\{L_\mathrm{min},\ldots,L_\mathrm{max}\}$, which is most probable a posteriori assuming that 
\begin{enumerate}
 \item[\textbf{(A.1)}] the true number of clusters $(K)$ in the observed data set $\mathcal{X}$ satisfies the constraint $L_\mathrm{min}\leq K\leq L_\mathrm{max}$.
\end{enumerate}
Mathematically, this corresponds to solving
\begin{equation}
 M_{\hat{K}} = \arg\underset{\mathcal{M}}{\max} \, \, p(M_l|\mathcal{X}),
 \label{eq:Mhat}
\end{equation}
where $p(M_l|\mathcal{X})$ is the posterior probability of $M_l$ given the observations $\mathcal{X}$. $p(M_l|\mathcal{X})$ can be written as
\begin{align}
 p(M_l|\mathcal{X}) &= \int_{\Omega_l} f(M_l,\bm{\Theta}_l|\mathcal{X})d\bm{\Theta}_l \nonumber \\
 &= f(\mathcal{X})^{-1}p(M_l)\int_{\Omega_l} f(\bm{\Theta}_l|M_l)\mathcal{L}(\bm{\Theta}_l|\mathcal{X})d\bm{\Theta}_l,
\label{eq:posterior}
\end{align} 
where $\mathcal{L}(\bm{\Theta}_l|\mathcal{X})\triangleq f(\mathcal{X}|M_l,\bm{\Theta}_l)$ is the likelihood function. $M_{\hat{K}}$ can also be determined via
\begin{equation}
 \arg \underset{\mathcal{M}}{\max} \, \, \log p(M_l|\mathcal{X})
\end{equation}
instead of Eq.~\eqref{eq:Mhat} since $\log$ is a monotonic function. Hence, taking the logarithm of Eq.~\eqref{eq:posterior} results in
\begin{equation}
 \log p(M_l|\mathcal{X}) \!=\! \log p(M_l) + \log\! \int_{\Omega_l}\!\! f(\bm{\Theta}_l|M_l)\mathcal{L}(\bm{\Theta}_l|\mathcal{X})d\bm{\Theta}_l + \rho,
 \label{eq:logposterior*}
\end{equation}
where $-\log f(\mathcal{X})$ is replaced by $\rho$ (a constant) since it is not a function of $M_l\in\mathcal{M}$ and thus has no effect on the maximization of $\log p(M_l|\mathcal{X})$ over $\mathcal{M}$. Since the partitions (clusters) $\mathcal{X}_m\subseteq\mathcal{X}, m=1,\ldots,l,$ are independent, mutually exclusive, and non-empty, $f(\bm{\Theta}_l|M_l)$ and $\mathcal{L}(\bm{\Theta}_l|\mathcal{X})$ can be written as 
\begin{align}
 f(\bm{\Theta}_l|M_l) &= \prod_{m=1}^lf(\bm{\theta}_m|M_l) \label{eq:priorpar} \\
 \mathcal{L}(\bm{\Theta}_l|\mathcal{X}) &= \prod_{m=1}^l\mathcal{L}(\bm{\theta}_m|\mathcal{X}_m).\label{eq:totalloglikeli}
\end{align}
Substituting Eqs.~\eqref{eq:priorpar} and \eqref{eq:totalloglikeli} into Eq.~\eqref{eq:logposterior*} results in 
\begin{align}
 \log p(M_l|\mathcal{X}) \!\!&=\! \log p(M_l) \!+\!\!\! \sum_{m=1}^l\!\log\! \int_{\mathbb{R}^q}\!\!\!\! f(\bm{\theta}_m|M_l)\mathcal{L}(\bm{\theta}_m|\mathcal{X}_m)d\bm{\theta}_m \! \nonumber \\
 & +\! \rho.
 \label{eq:logposterior}
\end{align}
Maximizing $\log p(M_l|\mathcal{X})$ over all candidate models $M_l\in\mathcal{M}$ involves the computation of the logarithm of a multidimensional integral. Unfortunately, the solution of the multidimensional integral does not possess a closed analytical form for most practical cases. This problem can be solved using either numerical integration or approximations that allow a closed-form solution. In the context of model selection, closed-form approximations are known to provide more insight into the problem than numerical integration \cite{djuric1998}. Following this line of argument, we use Laplace's method of integration \cite{djuric1998,stoica2004,ando2010} and provide an asymptotic approximation to the multidimensional integral in Eq.~\eqref{eq:logposterior}.  

\section{Proposed Bayesian Cluster Enumeration Criterion}
\label{sec:propcrit}
In this section, we derive a general BIC expression for cluster analysis, which we call $\text{BIC}_{\mbox{\tiny G}}(\cdot)$. Under some mild assumptions, we provide a closed-form expression that is applicable to a broad class of data distributions. \\
In order to provide a closed-form analytic approximation to Eq.~\eqref{eq:logposterior}, we begin by approximating the multidimensional integral using Laplace's method of integration. Laplace's method of integration makes the following assumptions.
\begin{enumerate}
 \item[\textbf{(A.2)}] $\log \mathcal{L}(\bm{\theta}_m|\mathcal{X}_m)$ with $m=1,\ldots,l$ has first- and second-order derivatives which are continuous over the parameter space $\Omega_l$.
 \item[\textbf{(A.3)}] $\log \mathcal{L}(\bm{\theta}_m|\mathcal{X}_m)$ with $m=1,\ldots,l$ has a global maximum at $\hat{\bm{\theta}}_m$, where $\hat{\bm{\theta}}_m$ is an interior point of $\Omega_l$.
 \item[\textbf{(A.4)}] $f(\bm{\theta}_m|M_l)$ with $m=1,\ldots,l$ is continuously differentiable and its first-order derivatives are bounded on $\Omega_l$.
 \item[\textbf{(A.5)}] The negative of the Hessian matrix of $\frac{1}{N_m}\log \mathcal{L}(\bm{\theta}_m|\mathcal{X}_m)$
 \begin{equation}
  \hat{\bm{H}}_m \triangleq -\frac{1}{N_m}\frac{d^2\log \mathcal{L}(\bm{\theta}_m|\mathcal{X}_m)}{d\bm{\theta}_md\bm{\theta}_m^\top}\bigg|_{\bm{\theta}_m=\hat{\bm{\theta}}_m} \in \mathbb{R}^{q\times q}
 \end{equation}
is positive definite, where $N_m$ is the cardinality of $\mathcal{X}_m$ $\left(N_m=\#\mathcal{X}_m\right)$. That is, $\min_{s,m}\lambda_s\left(\hat{\bm{H}}_m\right)>\epsilon$ for $s=1,\ldots,q$ and $m=1,\ldots,l$, where $\lambda_s\left(\hat{\bm{H}}_m\right)$ is the $s$th eigenvalue of $\hat{\bm{H}}_m$ and $\epsilon$ is a small positive constant.  
\end{enumerate}
The first step in Laplace's method of integration is to write the Taylor series expansion of $f(\bm{\theta}_m|M_l)$ and $\log\mathcal{L}(\bm{\theta}_m|\mathcal{X}_m)$ around $\hat{\bm{\theta}}_m, m=1,\ldots,l$. We begin by approximating $\log \mathcal{L}(\bm{\theta}_m|\mathcal{X}_m)$ by its second-order Taylor series expansion around $\hat{\bm{\theta}}_m$ as follows:
\begin{align}
  \log \mathcal{L}(\bm{\theta}_m|\mathcal{X}_m) &\!\approx\! \log \mathcal{L}(\hat{\bm{\theta}}_m|\mathcal{X}_m) \! +\!\tilde{\bm{\theta}}_m^\top\frac{d\log \mathcal{L}(\bm{\theta}_m|\mathcal{X}_m)}{d\bm{\theta}_m} \bigg|_{\bm{\theta}_m=\hat{\bm{\theta}}_m}\nonumber \\
  & + \frac{1}{2}\tilde{\bm{\theta}}_m^\top\left[\frac{d^2\log \mathcal{L}(\bm{\theta}_m|\mathcal{X}_m)}{d\bm{\theta}_m d\bm{\theta}_m^\top}\bigg|_{\bm{\theta}_m=\hat{\bm{\theta}}_m}\right]\tilde{\bm{\theta}}_m\nonumber \\
  & = \log \mathcal{L}(\hat{\bm{\theta}}_m|\mathcal{X}_m) - \frac{N_m}{2}\tilde{\bm{\theta}}_m^\top\hat{\bm{H}}_m\tilde{\bm{\theta}}_m,
\label{eq:loglikeli}
 \end{align}
where $\tilde{\bm{\theta}}_m\triangleq \bm{\theta}_m-\hat{\bm{\theta}}_m, m=1,\ldots,l$. The first derivative of $\log \mathcal{L}(\bm{\theta}_m|\mathcal{X}_m)$ evaluated at $\hat{\bm{\theta}}_m$ vanishes because of assumption \textbf{(A.3)}. With 
\begin{equation}
 U \triangleq \int_{\mathbb{R}^q} f(\bm{\theta}_m|M_l)\exp\left(\log\mathcal{L}(\bm{\theta}_m|\mathcal{X}_m)\right)d\bm{\theta}_m,
 \label{eq:U}
\end{equation}
substituting Eq.~\eqref{eq:loglikeli} into Eq.~\eqref{eq:U} and approximating $f(\bm{\theta}_m|M_l)$ by its Taylor series expansion yields
\begin{align}
U &\approx \int_{\mathbb{R}^q} \biggl(\biggl[f(\hat{\bm{\theta}}_m|M_l)+ \tilde{\bm{\theta}}_m^\top\frac{df(\bm{\theta}_m|M_l)}{d\bm{\theta}_m}\bigg|_{\bm{\theta}_m=\hat{\bm{\theta}}_m}+\text{HOT}\biggr] \nonumber \\
& \times \mathcal{L}(\hat{\bm{\theta}}_m|\mathcal{X}_m)\exp\left(-\frac{N_m}{2}\tilde{\bm{\theta}}_m^\top\hat{\bm{H}}_m\tilde{\bm{\theta}}_m\right)d\bm{\theta}_m\biggr),
\label{eq:U2}
\end{align}
where HOT denotes higher order terms and $\exp\left(-\frac{N_m}{2}\tilde{\bm{\theta}}_m^\top\hat{\bm{H}}_m\tilde{\bm{\theta}}_m\right)$ is a Gaussian kernel with mean $\hat{\bm{\theta}}_m$ and covariance matrix $\left(N_m\hat{\bm{H}}_m\right)^{-1}$. The second term in the first line of Eq.~\eqref{eq:U2} vanishes because it simplifies to $\kappa\E\left[\bm{\theta}_m-\hat{\bm{\theta}}_m\right]=0$, where $\kappa$ is a constant (see \cite[p. 53]{ando2010} for more details). Consequently, Eq.~\eqref{eq:U2} reduces to
\begin{align}
 U &\!\approx\! f(\hat{\bm{\theta}}_m|M_l)\mathcal{L}(\hat{\bm{\theta}}_m|\mathcal{X}_m) \int_{\mathbb{R}^q}\exp\left(-\frac{N_m}{2}\tilde{\bm{\theta}}_m^\top\hat{\bm{H}}_m\tilde{\bm{\theta}}_m\right)d\bm{\theta}_m \nonumber \\
& = f(\hat{\bm{\theta}}_m|M_l)\mathcal{L}(\hat{\bm{\theta}}_m|\mathcal{X}_m)\int_{\mathbb{R}^q}\biggl((2\pi)^{q/2}\left|N_m^{-1}\hat{\bm{H}}_m^{-1}\right|^{1/2} \nonumber \\
& \frac{1}{(2\pi)^{q/2}\left|N_m^{-1}\hat{\bm{H}}_m^{-1}\right|^{1/2}} \exp\left(-\frac{N_m}{2}\tilde{\bm{\theta}}_m^\top\hat{\bm{H}}_m\tilde{\bm{\theta}}_m\right)d\bm{\theta}_m\biggr) \nonumber \\
& = f(\hat{\bm{\theta}}_m|M_l)\mathcal{L}(\hat{\bm{\theta}}_m|\mathcal{X}_m)(2\pi)^{q/2}\left|N_m^{-1}\hat{\bm{H}}_m^{-1}\right|^{1/2},
\label{eq:U3}
\end{align}
where $|\cdot|$ stands for the determinant, given that $N_m\rightarrow\infty$. Using Eq.~\eqref{eq:U3}, we are thus able to provide an asymptotic approximation to the multidimensional integral in Eq.~\eqref{eq:logposterior}. Now, substituting Eq.~\eqref{eq:U3} into Eq.~\eqref{eq:logposterior}, we arrive at
\begin{align}
 \log p(M_l|\mathcal{X}) & \approx \log p(M_l) + \sum_{m=1}^l\log\left( f(\hat{\bm{\theta}}_m|M_l)\mathcal{L}(\hat{\bm{\theta}}_m|\mathcal{X}_m)\right) \nonumber \\
 &  + \frac{lq}{2}\log 2\pi - \frac{1}{2}\sum_{m=1}^l\log \left|\hat{\bm{J}}_m\right| + \rho, 
\label{eq:finalposterior}
\end{align}
where 
\begin{equation}
 \hat{\bm{J}}_m \triangleq N_m\hat{\bm{H}}_m = -\frac{d^2\log \mathcal{L}(\bm{\theta}_m|\mathcal{X}_m)}{d\bm{\theta}_md\bm{\theta}_m^\top}\bigg|_{\bm{\theta}_m=\hat{\bm{\theta}}_m} \in \mathbb{R}^{q\times q}
\end{equation}
is the Fisher Information Matrix (FIM) of data from the $m$th partition. \\
In the derivation of $\log p(M_l|\mathcal{X})$, so far, we have made no distributional assumption on the data set $\mathcal{X}$ except that the log-likelihood function $\log\mathcal{L}(\bm{\theta}_m|\mathcal{X}_m)$ and the prior on the parameter vectors $f(\bm{\theta}_m|M_l)$, for $m=1,\ldots,l$, should satisfy some mild conditions under each model $M_l\in\mathcal{M}$. Hence, Eq.~\eqref{eq:finalposterior} is a general expression of the posterior probability of the model $M_l$ given $\mathcal{X}$ for a general class of data distributions that satisfy assumptions \textbf{(A.2)}-\textbf{(A.5)}. The BIC is concerned with the computation of the posterior probability of candidate models and thus Eq.~\eqref{eq:finalposterior} can also be written as
\begin{align}
 \text{BIC}_{\mbox{\tiny G}}(M_l) &\triangleq \log p(M_l|\mathcal{X}) \nonumber \\
 & \approx \log p(M_l) + \log f(\hat{\bm{\Theta}}_l|M_l) + \log \mathcal{L}(\hat{\bm{\Theta}}_l|\mathcal{X}) \nonumber \\
 & + \frac{lq}{2}\log 2\pi - \frac{1}{2}\sum_{m=1}^l\log \left|\hat{\bm{J}}_m\right| + \rho. 
\label{eq:BICgen}
 \end{align}
After calculating $\text{BIC}_{\mbox{\tiny G}}(M_l)$ for each candidate model $M_l\in\mathcal{M}$, the number of clusters in $\mathcal{X}$ is estimated as
\begin{equation}
 \hat{K}_{\text{BIC}_{\mbox{\tiny G}}} = \underset{l=L_\mathrm{min},\ldots,L_\mathrm{max}}{\arg \max}\text{BIC}_{\mbox{\tiny G}}(M_l).
 \label{eq:BICGGK}
\end{equation}
However, calculating $\text{BIC}_{\mbox{\tiny G}}(M_l)$ using Eq.~\eqref{eq:BICgen} is a computationally expensive task as it requires the estimation of the FIM, $\hat{\bm{J}}_m$, for each cluster $m=1,\ldots,l$ in the candidate model $M_l\in\mathcal{M}$. Our objective is to find an asymptotic approximation for $\log \left|\hat{\bm{J}}_m\right|, m=1,\ldots,l,$ in order to simplify the computation of $\text{BIC}_{\mbox{\tiny G}}(M_l)$. We solve this problem by imposing specific assumptions on the distribution of the data set $\mathcal{X}$. In the next section, we provide an asymptotic approximation for $\log\left|\hat{\bm{J}}_m\right|, m=1,\ldots,l,$ assuming that $\mathcal{X}_m$ contains iid multivariate Gaussian data points.  

\section{Proposed Bayesian Cluster Enumeration Algorithm for Multivariate Gaussian Data}
\label{sec:gauss}
We propose a two-step approach to estimate the number of partitions (clusters) in $\mathcal{X}$ and provide an estimate of cluster parameters, such as cluster centroids and covariance matrices, in an unsupervised learning framework. The proposed approach consists of a model-based clustering algorithm, which clusters the data set $\mathcal{X}$ according to each candidate model $M_l\in\mathcal{M}$, and a Bayesian cluster enumeration criterion that selects the model which is a posteriori most probable. 

\subsection{Proposed Bayesian Cluster Enumeration Criterion for Multivariate Gaussian Data}
Let $\mathcal{X}\triangleq\{\bm{x}_1,\ldots,\bm{x}_N\}$ denote the observed data set which can be partitioned into $K$ clusters $\{\mathcal{X}_1,\ldots,\mathcal{X}_K\}$. Each cluster $\mathcal{X}_k, k\in\mathcal{K},$ contains $N_k$ data vectors that are realizations of iid Gaussian random variables $\bm{x}_k\sim\mathcal{N}(\bm{\mu}_k,\bm{\Sigma}_k)$, where $\bm{\mu}_k\in\mathbb{R}^{r\times 1}$ and $\bm{\Sigma}_k\in\mathbb{R}^{r\times r}$ represent the centroid and the covariance matrix of the $k$th cluster, respectively. Further, let $\mathcal{M}\triangleq\{M_{L_\mathrm{min}},\ldots,M_{L_\mathrm{max}}\}$ denote a set of Gaussian candidate models and let there be a clustering algorithm that partitions $\mathcal{X}$ into $l$ independent, mutually exclusive, and non-empty subsets (clusters) $\mathcal{X}_m, m=1,\ldots,l,$ by providing parameter estimates $\hat{\bm{\theta}}_m =[\hat{\bm{\mu}}_m,\hat{\bm{\Sigma}}_m]^\top$ for each candidate model $M_l\in\mathcal{M}$, where $l=L_\mathrm{min},\ldots,L_\mathrm{max}$ and $l\in\mathbb{Z}^{+}$. Assume that \textbf{(A.1)}-\textbf{(A.7)} are satisfied. 
\begin{theorem}
The posterior probability of $M_l\in\mathcal{M}$ given $\mathcal{X}$ can be asymptotically approximated as
 \begin{align}
  \text{\emph{BIC}}_{\mbox{\tiny \emph{N}}}(M_l) &\triangleq \log p(M_l|\mathcal{X}) \nonumber \\
  & \approx \sum_{m=1}^l N_m\log N_m - \sum_{m=1}^l\frac{N_m}{2}\log \left|\hat{\bm{\Sigma}}_m\right| \nonumber \\
  & - \frac{q}{2}\sum_{m=1}^l\log N_m,
  \label{eq:BICG}
 \end{align}
where $N_m$ is the cardinality of the subset $\mathcal{X}_m$ and it satisfies $N=\sum_{m=1}^lN_m$. The term $\sum_{m=1}^l\log N_m$ sums the logarithms of the number of data vectors in each cluster $m=1,\ldots,l$.
\end{theorem}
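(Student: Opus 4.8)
The plan is to specialize the general criterion $\text{BIC}_{\mbox{\tiny G}}(M_l)$ of Eq.~\eqref{eq:BICgen} to Gaussian data and to replace the only remaining distribution-dependent quantity, $\sum_{m=1}^{l}\log|\hat{\bm{J}}_m|$, by an explicit large-sample expression. First I would evaluate the likelihood factor $\log\mathcal{L}(\hat{\bm{\Theta}}_l|\mathcal{X})=\sum_{m=1}^l\log\mathcal{L}(\hat{\bm{\theta}}_m|\mathcal{X}_m)$. Because each partition of the clustering model contributes a Gaussian density together with the proportion of points assigned to it, and these are maximized by the sample mean $\hat{\bm{\mu}}_m$, the sample covariance $\hat{\bm{\Sigma}}_m$, and the proportion $N_m/N$, one obtains
\begin{equation}
 \log\mathcal{L}(\hat{\bm{\theta}}_m|\mathcal{X}_m) = N_m\log\frac{N_m}{N} - \frac{N_m r}{2}\log 2\pi - \frac{N_m}{2}\log\bigl|\hat{\bm{\Sigma}}_m\bigr| - \frac{N_m}{2}\tr\bigl(\hat{\bm{\Sigma}}_m^{-1}\hat{\bm{\Sigma}}_m\bigr).
 \label{eq:plan-lik}
\end{equation}
The trace collapses to $-N_m r/2$ and, after summation over $m$, the terms $-\tfrac{Nr}{2}\log 2\pi$, $-\tfrac{Nr}{2}$ and $-N\log N$ do not depend on the candidate model and can be absorbed into the constant $\rho$. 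Hence the model-dependent part of $\log\mathcal{L}(\hat{\bm{\Theta}}_l|\mathcal{X})$ is $\sum_{m=1}^l N_m\log N_m-\sum_{m=1}^l\tfrac{N_m}{2}\log|\hat{\bm{\Sigma}}_m|$, which already matches the first two terms of Eq.~\eqref{eq:BICG}.

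Next I would treat the penalty term $-\tfrac12\sum_m\log|\hat{\bm{J}}_m|$. Writing $\bm{\theta}_m$ as the length-$q$ vector collecting the Gaussian cluster parameters $[\bm{\mu}_m,\bm{\Sigma}_m]$, I would compute the Hessian of $\log\mathcal{L}(\bm{\theta}_m|\mathcal{X}_m)$ at $\hat{\bm{\theta}}_m$ using the vectorization and Kronecker-product differentiation rules of Appendix~\ref{app:B}, i.e.\ differentiating $-\tfrac{N_m}{2}\log|\bm{\Sigma}_m|$ and the quadratic-form term of the Gaussian log-likelihood with respect to $\bm{\mu}_m$ and $\text{vec}(\bm{\Sigma}_m)$. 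Two structural facts drive the argument: the cross derivatives between $\bm{\mu}_m$ and $\bm{\Sigma}_m$ are proportional to $\sum_{\bm{x}\in\mathcal{X}_m}(\bm{x}-\hat{\bm{\mu}}_m)=\bm{0}$, so $\hat{\bm{J}}_m$ is block diagonal; and each block equals $N_m$ times a matrix that depends only on $\hat{\bm{\Sigma}}_m$ (the mean block being $N_m\hat{\bm{\Sigma}}_m^{-1}$), in agreement with $\hat{\bm{J}}_m=N_m\hat{\bm{H}}_m$. Consequently
\begin{equation}
 \log\bigl|\hat{\bm{J}}_m\bigr| = q\log N_m + \log\bigl|\hat{\bm{H}}_m\bigr| = q\log N_m + \mathcal{O}(1),
 \label{eq:plan-fim}
\end{equation}
where the $\mathcal{O}(1)$ bound on $\log|\hat{\bm{H}}_m|$ follows from the eigenvalue bounds of assumption \textbf{(A.5)} together with \textbf{(A.6)}--\textbf{(A.7)}, and summation gives $-\tfrac12\sum_m\log|\hat{\bm{J}}_m|=-\tfrac{q}{2}\sum_m\log N_m+\mathcal{O}(1)$, i.e.\ the last term of Eq.~\eqref{eq:BICG}.

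I expect this second step to be the main obstacle: the matrix calculus of the covariance block must be carried out correctly, in particular respecting the symmetry of $\bm{\Sigma}_m$ so that the exponent in Eq.~\eqref{eq:plan-fim} comes out as the true number of free parameters $q$ of a Gaussian cluster, and the remainder must be controlled so that it is genuinely $\mathcal{O}(1)$, uniformly in $m$ and independent of how the $N_m$ scale with $N$.

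Finally I would collect terms. Substituting Eqs.~\eqref{eq:plan-lik} and \eqref{eq:plan-fim} into Eq.~\eqref{eq:BICgen} yields $\text{BIC}_{\mbox{\tiny N}}(M_l)$ equal to $\sum_m N_m\log N_m-\sum_m\tfrac{N_m}{2}\log|\hat{\bm{\Sigma}}_m|-\tfrac{q}{2}\sum_m\log N_m$ plus the bounded terms $\log p(M_l)+\log f(\hat{\bm{\Theta}}_l|M_l)+\tfrac{lq}{2}\log 2\pi+\rho+\mathcal{O}(1)$. Here $\log p(M_l)$ is $\mathcal{O}(1)$ because $\mathcal{M}$ is finite, $\log f(\hat{\bm{\Theta}}_l|M_l)$ is bounded by \textbf{(A.4)}, and $\tfrac{lq}{2}\log 2\pi$ and $\rho$ are likewise bounded in $N$; all of these are dominated, as $N\to\infty$, by the terms growing linearly in the $N_m$ and in $\log N_m$, so they are discarded in the asymptotic approximation, leaving exactly Eq.~\eqref{eq:BICG}.
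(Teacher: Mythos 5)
Your proposal is correct and follows essentially the same route as the paper's Appendix~\ref{app:A}: evaluate the Gaussian per-cluster log-likelihood (including the $N_m/N$ mixing term) at the MLEs, exploit the vanishing mean--covariance cross derivatives and the duplication-matrix treatment of the symmetric $\bm{\Sigma}_m$ to get $\log|\hat{\bm{J}}_m| = q\log N_m + \mathcal{O}(1)$, and then discard the model-independent and bounded terms under \textbf{(A.7)}. The only difference is presentational (you factor the argument into the two substeps up front), so no gap to report.
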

\begin{proof}
Proving Theorem 1 requires finding an asymptotic approximation to $\log\left|\hat{\bm{J}}_m\right|$ in Eq.~\eqref{eq:BICgen} and, based on this approximation, deriving an expression for $\text{BIC}_{\mbox{\tiny N}}(M_l)$. A detailed proof is given in Appendix~\ref{app:A}.
\end{proof}
\noindent Once the Bayesian Information Criterion, $\text{{BIC}}_{\mbox{\tiny {N}}}(M_l)$, is computed for each candidate model $M_l\in\mathcal{M}$, the number of partitions (clusters) in $\mathcal{X}$ is estimated as
\begin{equation}
 \hat{K}_{\text{{BIC}}_{\mbox{\tiny {N}}}} = \underset{l=L_\mathrm{min},\ldots,L_\mathrm{max}}{\arg \max}\text{{BIC}}_{\mbox{\tiny {N}}}(M_l).
 \label{eq:BICGK}
\end{equation}
\begin{remark}
The proposed criterion, $\text{\emph {BIC}}_{\mbox{\tiny \emph N}}$, and the original BIC as derived in  \cite{schwarz1978,cavanaugh1999} differ in terms of their penalty terms. A detailed discussion is provided in Section \ref{sec:penTerm}.
\end{remark}
\noindent The first step in calculating $\text{BIC}_{\mbox{\tiny N}}(M_l)$ for each model $M_l\in\mathcal{M}$ is the partitioning of the data set $\mathcal{X}$ into $l$ clusters $\mathcal{X}_m, m=1,\ldots,l,$ and the estimation of the associated cluster parameters using an unsupervised learning algorithm. Since the approximations in $\text{BIC}_{\mbox{\tiny N}}(M_l)$ are based on maximizing the likelihood function of Gaussian distributed random variables, we use a clustering algorithm that is based on the maximum likelihood principle. Accordingly, a natural choice is the EM algorithm for Gaussian mixture models. 

\subsection{The Expectation-Maximization (EM) Algorithm for Gaussian Mixture Models}
\label{subsec:EM}
The EM algorithm finds maximum likelihood solutions for models with latent variables \cite{bishop2006}. In our case, the latent variables are the cluster memberships of the data vectors in $\mathcal{X}$, given that the $l$-component Gaussian mixture distribution of a data vector $\bm{x}_n$ can be written as
\begin{equation}
 f(\bm{x}_n|M_l,\bm{\Theta}_l) = \sum_{m=1}^l\tau_m g(\bm{x}_n;\bm{\mu}_m,\bm{\Sigma}_m),
\end{equation}
where $g(\bm{x}_n;\bm{\mu}_m,\bm{\Sigma}_m)$ represents the $r$-variate Gaussian pdf and $\tau_m$ is the mixing coefficient of the $m$th cluster. The goal of the EM algorithm is to maximize the log-likelihood function of the data set $\mathcal{X}$ with respect to the parameters of interest as follows:
\begin{equation}
\arg \underset{\bm{\Psi}_l}{\max}\log \mathcal{L}(\bm{\Psi}_l|\mathcal{X})\!\!=\!\! \arg \underset{\bm{\Psi}_l}{\max}\!\!\sum_{n=1}^N\!\log \!\! \sum_{m=1}^l\!\tau_m g(\bm{x}_n;\bm{\mu}_m,\bm{\Sigma}_m),
\label{eq:maxln}
\end{equation}
where $\bm{\Psi}_l=[\bm{\tau}_l,\bm{\Theta}_l^\top]$ and $\bm{\tau}_l=\left[\tau_1,\ldots,\tau_l\right]^\top$. Maximizing Eq.~\eqref{eq:maxln} with respect to the elements of $\bm{\Psi}_l$ results in coupled equations. The EM algorithm solves these coupled equations using a two-step iterative procedure. The first step (E step) evaluates $\hat{\upsilon}^{(i)}_{nm}$, which is an estimate of the probability that data vector $\bm{x}_n$ belongs to the $m$th cluster at the $i$th iteration, for $n=1,\ldots,N$ and $m=1,\ldots,l$. $\hat{\upsilon}_{nm}^{(i)}$ is calculated as follows:
\begin{equation}
  \hat{\upsilon}_{nm}^{(i)} = \frac{\hat{\tau}_m^{(i-1)}g(\bm{x}_n;\hat{\bm{\mu}}_m^{(i-1)},\hat{\bm{\Sigma}}_m^{(i-1)})}{\sum_{j=1}^l\hat{\tau}_j^{(i-1)}g(\bm{x}_n;\hat{\bm{\mu}}_j^{(i-1)},\hat{\bm{\Sigma}}_j^{(i-1)})},
\label{eq:upsilon}
\end{equation}
where $\hat{\bm{\mu}}_m^{(i-1)}$ and $\hat{\bm{\Sigma}}_m^{(i-1)}$ represent the centroid and covariance matrix estimates, respectively, of the $m$th cluster at the previous iteration ($i-1$). The second step (M step) re-estimates the cluster parameters using the current values of $\hat{\upsilon}_{nm}$ as follows:
 \begin{align}
  \hat{\bm{\mu}}_m^{(i)} &= \frac{\sum_{n=1}^N\hat{\upsilon}_{nm}^{(i)}\bm{x}_n}{\sum_{n=1}^N\hat{\upsilon}_{nm}^{(i)}} \label{eq:muhat} \\
  \hat{\bm{\Sigma}}_m^{(i)} &= \frac{\sum_{n=1}^N\hat{\upsilon}_{nm}^{(i)}(\bm{x}_n-\hat{\bm{\mu}}_m^{(i)})(\bm{x}_n-\hat{\bm{\mu}}_m^{(i)})^\top}{\sum_{n=1}^N\hat{\upsilon}_{nm}^{(i)}} \label{eq:sigmahat} \\
  \hat{\tau}_m^{(i)} &= \frac{\sum_{n=1}^N\hat{\upsilon}_{nm}^{(i)}}{N} \label{eq:tauhat}
 \end{align}
The E and M steps are performed iteratively until either the cluster parameter estimates $\hat{\bm{\Psi}}_l$ or the log-likelihood estimate $\log \mathcal{L}(\hat{\bm{\Psi}}_l|\mathcal{X})$ converges. \\
A summary of the estimation of the number of clusters in an observed data set using the proposed two-step approach is provided in Algorithm~\ref{alg:BICG}. Note that the computational complexity of $\text{BIC}_{\mbox{\tiny N}}(M_l)$ is only $\mathcal{O}(1)$, which can easily be ignored during the run-time analysis of the proposed approach.  Hence, since the EM algorithm is run for all candidate models in $\mathcal{M}$, the computational complexity of the proposed two-step approach is $\mathcal{O}(\zeta Nr^2\left(L_{\mathrm{min}}+\ldots+L_{\mathrm{max}}\right))$, where $\zeta$ is a fixed stopping threshold of the EM algorithm. 
\begin{algorithm}[htb]
 \caption{Proposed two-step cluster enumeration approach}
 \begin{algorithmic}
  \State \textit{Inputs:} data set $\mathcal{X}$; set of candidate models $\mathcal{M}\triangleq \{M_{L_\mathrm{min}},\ldots,M_{L_\mathrm{max}}\}$ 
  \For {$l=L_\mathrm{min},\ldots,L_\mathrm{max}$}
  \State {\it Step 1:} Model-based clustering
  \State {\it Step 1.1:} The EM algorithm
  \For {$m=1,\ldots,l$}
  \State Initialize $\bm{\mu}_m$ using K-means++ \cite{blomer2016,arthur2007}
  \State $\hat{\bm{\Sigma}}_m = \frac{1}{N_m}\sum_{\bm{x}_n\in\mathcal{X}_m}(\bm{x}_{n}-\hat{\bm{\mu}}_m)(\bm{x}_{n}-\hat{\bm{\mu}}_m)^\top$
  \State $\hat{\tau}_m = \frac{N_m}{N}$
 \EndFor
 \For {$i=1,2,\ldots$}
 \State \textit{E step:}
 \For {$n=1,\ldots,N$}
 \For {$m=1,\ldots,l$}
 \State Calculate $\hat{\upsilon}_{nm}^{(i)}$ using Eq.~\eqref{eq:upsilon}
 \EndFor
 \EndFor
 \State \textit{M step:}
 \For {$m=1,\ldots,l$}
 \State Determine $\hat{\bm{\mu}}_m^{(i)}$, $\hat{\bm{\Sigma}}_m^{(i)}$, and $\hat{\tau}_m^{(i)}$ via Eqs.~(\ref{eq:muhat})-(\ref{eq:tauhat})
 \EndFor
 \State Check for convergence of either $\hat{\bm{\Psi}}_l^{(i)}$ or $\log \mathcal{L}(\hat{\bm{\Psi}}_l^{(i)}|\mathcal{X})$
 \If {convergence condition is satisfied}
 \State Exit for loop
 \EndIf
 \EndFor
 \State \textit{Step 1.2:} Hard clustering
 \For {$n=1,\ldots,N$}
 \For {$m=1,\ldots,l$}
  \begin{equation*}
  \iota_{nm} = \begin{cases}
                               1, & m=\underset{j=1,\ldots,l}{\arg\max}\!\!\!\!\quad\hat{\upsilon}_{nj}^{(i)} \\
                               0, & \text{otherwise}
                      \end{cases}
 \end{equation*}
 \EndFor
 \EndFor
 \For {$m=1,\ldots,l$}
 \State $N_m = \sum_{n=1}^N\iota_{nm}$ 
 \EndFor
 \State {\it Step 2:} Calculate $\text{BIC}_{\mbox{\tiny N}}(M_l)$ via Eq.~\eqref{eq:BICG}
 \EndFor
 \State Estimate the number of clusters, $\hat{K}_{\text{BIC}_{\mbox{\tiny N}}}$, in $\mathcal{X}$ via Eq.~\eqref{eq:BICGK}
 \end{algorithmic}
 \label{alg:BICG}
\end{algorithm}

\section{Existing BIC-Based Cluster Enumeration Methods}
\label{sec:stateoftheart}
As discussed in Section~\ref{sec:intro}, existing cluster enumeration algorithms that are based on the original BIC use the criterion as it is known from parameter estimation tasks without questioning its validity on cluster analysis. Nevertheless, since these criteria have been widely used, we briefly review them to provide a comparison to the proposed criterion $\text{BIC}_{\mbox{\tiny N}}$, which is given by Eq.~\eqref{eq:BICG}.  \\
The original BIC, as derived in \cite{cavanaugh1999}, evaluated at a candidate model $M_l\in\mathcal{M}$ is written as
\begin{equation}
 \text{BIC}_{\mbox{\tiny O}}(M_l) = 2\log \mathcal{L}(\hat{\bm{\Theta}}_l|\mathcal{X}) - ql\log N,
 \label{eq:BICc}
\end{equation}
where $\mathcal{L}(\hat{\bm{\Theta}}_l|\mathcal{X})$ denotes the likelihood function and $N=\#\mathcal{X}$. In Eq.~\eqref{eq:BICc}, $2\log \mathcal{L}(\hat{\bm{\Theta}}_l|\mathcal{X})$ denotes the data-fidelity term, while $ql\log N$ is the penalty term. Under the assumption that the observed data is Gaussian distributed, the data-fidelity terms of $\text{BIC}_{\mbox{\tiny O}}$ and the ones of our proposed criterion, $\text{BIC}_{\mbox{\tiny N}}$, are exactly the same. The only deference between the two is the penalty term. Hence, we use a similar procedure as in Algorithm~\ref{alg:BICG} to implement the original BIC as a wrapper around the EM algorithm. \\
Moreover, the original BIC is commonly used as a wrapper around K-means by assuming that the data points that belong to each cluster are iid as Gaussian and all clusters are spherical with an identical variance, i.e. $\bm{\Sigma}_m = \bm{\Sigma}_j=\sigma^2\bm{I}_r$ for $m\neq j$, where $\sigma^2$ is the common variance of the clusters in $M_l$ \cite{pelleg2000,zhao2008,zhao22008}. Under these assumptions, the original BIC is given by
\begin{equation}
 \text{BIC}_{\mbox{\tiny OS}}(M_l) = 2\log \mathcal{L}(\hat{\bm{\Theta}}_l|\mathcal{X}) - \alpha\log N,
 \label{eq:BICcs1}
\end{equation}
where $\text{BIC}_{\mbox{\tiny OS}}(M_l)$ denotes the original BIC of the candidate model $M_l$ derived under the assumptions stated above and $\alpha=\left(rl+1\right)$ is the number of estimated parameters in $M_l\in\mathcal{M}$. Ignoring the model independent terms, $\text{BIC}_{\mbox{\tiny OS}}(M_l)$ can be written as 
\begin{equation}
 \text{BIC}_{\mbox{\tiny OS}}(M_l) = 2\sum_{m=1}^lN_m\log N_m - rN\log\hat{\sigma}^2 - \alpha\log N, 
 \label{eq:BICcs}
\end{equation}
where 
\begin{equation}
 \hat{\sigma}^2 = \frac{1}{rN}\sum_{m=1}^l\sum_{\bm{x}_n\in\mathcal{X}_m}\left(\bm{x}_{n}-\hat{\bm{\mu}}_m\right)^\top\left(\bm{x}_{n}-\hat{\bm{\mu}}_m\right)
 \label{eq:sigma}
\end{equation}
is the maximum likelihood estimator of the common variance. \\
In our experiments, we implement $\text{BIC}_{\mbox{\tiny OS}}$ as a wrapper around the K-means++ algorithm \cite{arthur2007}. The implementation of the proposed BIC as a wrapper around the K-means++ algorithm is given by Eq.~\eqref{eq:BICNS}.

\section{Comparison of the Penalty Terms of Different Bayesian Cluster Enumeration Criteria}
\label{sec:penTerm}
Comparing Eqs.~\eqref{eq:BICG}, ~\eqref{eq:BICc}, and ~\eqref{eq:BICcs1}, we notice that they have a common form \cite{stoica2004,rao1989}, that is, 
\begin{equation}
 2\log \mathcal{L}(\hat{\bm{\Theta}}_l|\mathcal{X}) - \eta,
 \label{eq:BICuni}
\end{equation}
but with different penalty terms, where
\begin{align}
 \text{BIC}_{\mbox{\tiny N}}:& \quad \eta = q\sum_{m=1}^l\log N_m \label{eq:penN} \\
 \text{BIC}_{\mbox{\tiny O}}:& \quad \eta = ql\log N \label{eq:penC} \\
 \text{BIC}_{\mbox{\tiny OS}}:& \quad \eta = \left(rl+1\right)\log N. \label{eq:penCS} 
\end{align}
\begin{remark}
 $\text{\emph{BIC}}_{\mbox{\tiny \emph{O}}}$ and $\text{\emph{BIC}}_{\mbox{\tiny \emph{OS}}}$ carry information about the structure of the data only on their data-fidelity term, which is the first term in Eq.~\eqref{eq:BICuni}. On the other hand, as shown in Eq.~\eqref{eq:BICG}, both the data-fidelity and penalty terms of our proposed criterion, $\text{\emph{BIC}}_{\mbox{\tiny \emph{N}}}$, contain information about the structure of the data.  
\end{remark}
\noindent The penalty terms of $\text{BIC}_{\mbox{\tiny O}}$ and $\text{BIC}_{\mbox{\tiny OS}}$ depend linearly on $l$, while the penalty term of our proposed criterion, $\text{BIC}_{\mbox{\tiny N}}$, depends on $l$ in a non-linear manner. Comparing the penalty terms in Eqs.~\eqref{eq:penN}-\eqref{eq:penCS}, $\text{BIC}_{\mbox{\tiny OS}}$ has the weakest penalty term. In the asymptotic regime, the penalty terms of $\text{BIC}_{\mbox{\tiny N}}$ and $\text{BIC}_{\mbox{\tiny O}}$ coincide. But, in the finite sample regime, for values of $l>1$, the penalty term of $\text{BIC}_{\mbox{\tiny O}}$ is stronger than the penalty term of $\text{BIC}_{\mbox{\tiny N}}$. Note that the penalty term of our proposed criterion, $\text{BIC}_{\mbox{\tiny N}}$, depends on the number of data vectors in each cluster, $N_m, m=1,\ldots,l$, of each candidate model $M_l\in\mathcal{M}$, while the penalty term of the original BIC depends only on the total number of data vectors in the data set. Hence, the penalty term of our proposed criterion might exhibit sensitivities to the initialization of cluster parameters and the associated number of data vectors per cluster.    

\section{Experimental Evaluation}
\label{sec:results}
In this section, we compare the cluster enumeration performance of our proposed criterion, $\text{BIC}_{\mbox{\tiny N}}$ given by Eq.~\eqref{eq:BICG}, with the cluster enumeration methods discussed in Section~\ref{sec:stateoftheart}, namely $\text{BIC}_{\mbox{\tiny O}}$ and $\text{BIC}_{\mbox{\tiny OS}}$ given by Eqs.~\eqref{eq:BICc} and \eqref{eq:BICcs}, respectively, using synthetic and real data sets. We first describe the performance measures used for comparing the different cluster enumeration criteria. Then, the numerical experiments performed on synthetic data sets and the results obtained from real data sets are discussed in detail. For all simulations, we assume that a family of candidate models $\mathcal{M}\triangleq\{M_{L_{\mathrm{min}}},\ldots,M_{L_{\mathrm{max}}}\}$ is given with $L_{\mathrm{min}}=1$ and $L_{\mathrm{max}}=2K$, where $K$ is the true number of clusters in the data set $\mathcal{X}$. All simulation results are an average of $1000$ Monte Carlo experiments unless stated otherwise. The compared cluster enumeration criteria are based on the same initial cluster parameters in each Monte Carlo experiment, which allows for a fair comparison. \\
The MATLAB\textsuperscript{\textcopyright} code that implements the proposed two-step algorithm and the Bayesian cluster enumeration methods discussed in Section \ref{sec:stateoftheart} is available at:\\ \href{https://github.com/FreTekle/Bayesian-Cluster-Enumeration}{\color{blue}https://github.com/FreTekle/Bayesian-Cluster-Enumeration}

\subsection{Performance Measures}
\label{subsec:perf}
The empirical probability of detection $\left(p_{\text{det}}\right)$, the empirical probability of underestimation $\left(p_{\text{under}}\right)$, the empirical probability of selection, and the Mean Absolute Error (MAE) are used as performance measures. The empirical probability of detection is defined as the probability with which the correct number of clusters is selected and it is calculated as
\begin{equation}
 p_{\text{det}} =\frac{1}{\text{MC}} \sum_{i=1}^{\text{MC}}\mathbbm{1}_{\{\hat{K}_i=K\}},
\end{equation}
where $\text{MC}$ is the total number of Monte Carlo experiments, $\hat{K}_i$ is the estimated number of clusters in the $i$th Monte Carlo experiment, and $\mathbbm{1}_{\{\hat{K}_i=K\}}$ is an indicator function which is defined as
\begin{equation}
 \mathbbm{1}_{\{\hat{K}_i=K\}} \triangleq 
 \begin{cases}
  1, \quad \textrm{if} \quad \hat{K}_i=K \\
  0, \quad \text{otherwise}
 \end{cases}.
\end{equation}
The empirical probability of underestimation $\left(p_{\text{under}}\right)$ is the probability that $\hat{K}<K$. The empirical probability of selection is defined as the probability with which the number of clusters specified by each candidate model in $\mathcal{M}$ is selected. The last performance measure, which is the Mean Absolute Error (MAE), is computed as 
\begin{equation}
 \textrm{MAE} = \frac{1}{\text{MC}}\sum_{i=1}^{\text{MC}}\left|K-\hat{K}_i\right|.
\end{equation}

\subsection{Numerical Experiments}
\label{subsec:numexp}
\subsubsection{Simulation Setup}
We consider two synthetic data sets, namely Data-1 and Data-2, in our simulations. Data-1, shown in Fig.~\ref{fig:data3}, contains realizations of the random variables $\bm{x}_k\sim\mathcal{N}(\bm{\mu}_k,\bm{\Sigma}_k)$, where $k = 1,2,3$, with cluster centroids $\bm{\mu}_1=[2, 3.5]^\top, \bm{\mu}_2=[6, 2.7]^\top, \bm{\mu}_3=[9, 4]^\top,$ and covariance matrices 
\[
 \bm{\Sigma}_1 \!=\! 
 \begin{bmatrix}
  0.2 & 0.1 \\
  0.1 & 0.75
 \end{bmatrix}\!, 
 \bm{\Sigma}_2 \!=\! 
 \begin{bmatrix}
  0.5 & 0.25 \\
  0.25 & 0.5
 \end{bmatrix}\!, 
 \bm{\Sigma}_3 \!=\! 
 \begin{bmatrix}
  1 & 0.5 \\
  0.5 & 1
 \end{bmatrix}\!.
\]
The first cluster is linearly separable from the others, while the remaining clusters are overlapping. The number of data vectors per cluster is specified as $N_1=\gamma\times50$, $N_2=\gamma\times100$, and $N_3=\gamma\times200$, where $\gamma$ is a constant. \\
The second data set, Data-2, contains realizations of the random variables $\bm{x}_k\sim\mathcal{N}(\bm{\mu}_k,\bm{\Sigma}_k)$, where $k = 1,\ldots,10$, with cluster centroids $\bm{\mu}_1=[0,0]^\top$, $\bm{\mu}_{2}=[3,-2.5]$, $\bm{\mu}_3=[3,1]^\top$, $\bm{\mu}_4=[-1,-3]^\top$, $\bm{\mu}_5=[-4,0]^\top$, $\bm{\mu}_6=[-1,1]^\top$, $\bm{\mu}_7=[-3,3]^\top$, $\bm{\mu}_8=[2.5,4]^\top$, $\bm{\mu}_9=[-3.5,-2.5]$, $\bm{\mu}_{10}=[0,3]^\top$, and covariance matrices
\[
\bm{\Sigma}_1\!=\! 
 \begin{bmatrix}
  0.25 & -0.15 \\
  -0.15 & 0.15
 \end{bmatrix}\!,
 \bm{\Sigma}_{2}\!=\! 
 \begin{bmatrix}
  0.5 & 0 \\
  0 & 0.15
 \end{bmatrix}\!,
 \bm{\Sigma}_i\!=\! 
 \begin{bmatrix}
  0.1 & 0 \\
  0 & 0.1
 \end{bmatrix}\!,
\]
where $i=3,\ldots,10$. As depicted in Fig.~\ref{fig:data10}, Data-2 contains eight identical and spherical clusters and two elliptical clusters. There exists an overlap between two clusters, while the rest of the clusters are well separated. All clusters in this data set have the same number of data vectors. 
\begin{figure}[t]
 \centering
 \begin{subfigure}[b]{\linewidth}
 \centering
  \includegraphics[width=\linewidth]{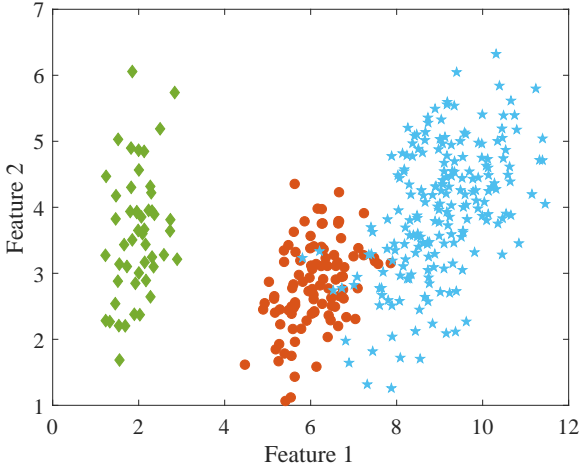}
 \caption{Data-1 for $\gamma=1$}
 \label{fig:data3}
 \end{subfigure}
\begin{subfigure}[b]{\linewidth}
\centering
 \includegraphics[width=\linewidth]{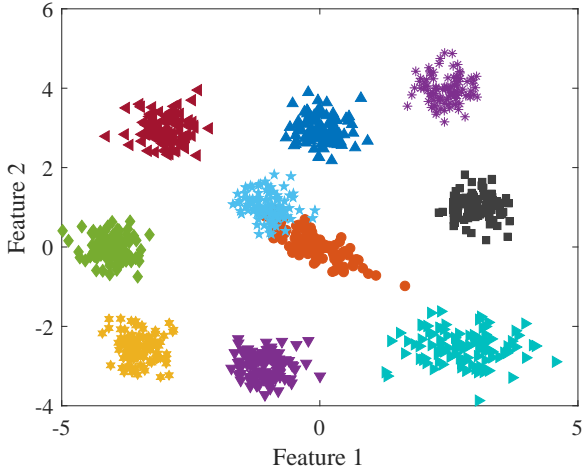}
 \caption{Data-2 for $N_k=100$}
 \label{fig:data10}
\end{subfigure}
\caption{Synthetic data sets.}
\end{figure}

\subsubsection{Simulation Results}
Data-1 is particularly challenging for cluster enumeration criteria because it has not only overlapping but also unbalanced clusters. Cluster unbalance refers to the fact that different clusters have a different number of data vectors, which might result in some clusters dominating the others.  The impact of cluster overlap and unbalance on $p_{\text{det}}$ and MAE is displayed in Table~\ref{tab:data2P+mae}. This table shows $p_{\text{det}}$ and MAE as a function of $\gamma$, where $\gamma$ is allowed to take values from the set $\{1,3,6,12,48\}$. The cluster enumeration performance of $\text{BIC}_{\mbox{\tiny OS}}$ is lower than the other methods because it is designed for spherical clusters with identical variance, while Data-1 has one elliptical and two spherical clusters with different covariance matrices. Our proposed criterion performs best in terms of $p_\text{det}$ and MAE for all values of $\gamma$. As $\gamma$ increases, which corresponds to an increase in the number of data vectors in the data set, the cluster enumeration performance of $\text{BIC}_{\mbox{\tiny N}}$ and $\text{BIC}_{\mbox{\tiny O}}$ greatly improves, while the performance of $\text{BIC}_{\mbox{\tiny OS}}$ deteriorates because of the increase in overestimation. The total criterion (BIC) and penalty term of different Bayesian cluster enumeration criteria as a function of the number of clusters specified by the candidate models for $\gamma=1$ is depicted in Fig.~\ref{fig:data3_gen}. The BIC plot in Fig.~\ref{fig:BIC} is the result of one Monte Carlo run. It shows that $\text{BIC}_{\mbox{\tiny N}}$ and $\text{BIC}_{\mbox{\tiny O}}$ have a maximum at the true number of clusters $(K=3)$, while $\text{BIC}_{\mbox{\tiny OS}}$ overestimates the number of clusters to $\hat{K}_{\text{BIC}_{\mbox{\tiny OS}}}=4$. As shown in Fig.~\ref{fig:data2Pen}, our proposed criterion, $\text{BIC}_{\mbox{\tiny N}}$, has the second strongest penalty term. Note that, the penalty term of our proposed criterion shows a curvature at the true number of clusters, while the penalty terms of $\text{BIC}_{\mbox{\tiny O}}$ and $\text{BIC}_{\mbox{\tiny OS}}$ are uninformative on their own. \\
\begin{table}[t]
 \caption{The empirical probability of detection in $\%$, the empirical probability of underestimation in $\%$, and the Mean Absolute Error (MAE) of various Bayesian cluster enumeration criteria as a function of $\gamma$ for Data-1.}
 \centering
 \begin{tabular}{ccccccc}
  \toprule
  $\gamma$ & & $1$ & $3$ & $6$ & $12$ & $48$ \\
  \midrule
  \multirow{2}{*}{$p_{\text{det}}(\%)$} & $\text{BIC}_{\mbox{\tiny N}}$ & $\bm{55.2}$ & $\bm{74.3}$ & $\bm{87.4}$ & $\bm{95.7}$ & $\bm{100}$ \\
  & $\text{BIC}_{\mbox{\tiny O}}$ & $43.6$ & $69.7$ & $85.1$ & $94.9$ &  $\bm{100}$ \\
  & $\text{BIC}_{\mbox{\tiny OS}}$ & $53.9$ & $50.5$ & $49.4$ & $42.4$ &  $31.8$ \\
  \midrule
  \multirow{2}{*}{$p_{\text{under}}(\%)$} & $\text{BIC}_{\mbox{\tiny N}}$ & $44.5$ & $25.7$ & $12.6$ & $4.3$ & $0$ \\
  & $\text{BIC}_{\mbox{\tiny O}}$ & $56.4$ & $30.3$ & $14.9$ & $5.1$ & $0$ \\
  & $\text{BIC}_{\mbox{\tiny OS}}$ & $0$ & $0$ & $0$ & $0$ & $0$ \\
  \midrule
  \multirow{2}{*}{$\text{MAE}$} & $\text{BIC}_{\mbox{\tiny N}}$ & $\bm{0.449}$ & $\bm{0.257}$ & $\bm{0.126}$ & $\bm{0.043}$ & $\bm{0}$ \\
  & $\text{BIC}_{\mbox{\tiny O}}$ & $0.564$ & $0.303$ & $0.149$ & $0.051$ & $\bm{0}$ \\
  & $\text{BIC}_{\mbox{\tiny OS}}$ & $0.469$ & $0.495$ & $0.506$ & $0.576$ &  $0.682$ \\
  \bottomrule
 \end{tabular}
 \label{tab:data2P+mae}
\end{table}
\begin{table}[t]
 \caption{The empirical probability of detection in $\%$, the empirical probability of underestimation in $\%$, and the Mean Absolute Error (MAE) of various Bayesian cluster enumeration criteria as a function of the number of data vectors per cluster $(N_k)$ for Data-2.}
 \centering
 \begin{tabular}{cccccc}
  \toprule
   $N_k$ & &  $100$ & $200$ & $500$ & $1000$ \\
  \midrule
  \multirow{2}{*}{$p_{\text{det}}(\%)$} & $\text{BIC}_{\mbox{\tiny N}}$ &  $\bm{56.1}$ & $\bm{66}$ & $\bm{81}$ & $\bm{85.3}$ \\
  & $\text{BIC}_{\mbox{\tiny O}}$ &  $41$ & $57.1$ & $78$ & $84.9$  \\
  & $\text{BIC}_{\mbox{\tiny OS}}$ &  $2.7$ & $0.9$ & $0.1$ & $0$  \\
  \midrule
  \multirow{2}{*}{$p_{\text{under}}(\%)$} & $\text{BIC}_{\mbox{\tiny N}}$ &  $37.6$ & $30.2$ & $18.2$ & $13.5$ \\
  & $\text{BIC}_{\mbox{\tiny O}}$ &  $58.6$ & $41.7$ & $21.4$ & $14.1$ \\
  & $\text{BIC}_{\mbox{\tiny OS}}$ &  $0$ & $0$ & $0$ & $0$ \\
  \midrule
  \multirow{2}{*}{$\text{MAE}$} & $\text{BIC}_{\mbox{\tiny N}}$ &  $\bm{0.452}$ & $\bm{0.341}$ & $\bm{0.19}$ & $\bm{0.148}$ \\
  & $\text{BIC}_{\mbox{\tiny O}}$ &  $0.59$ & $0.429$ & $0.22$ & $0.151$ \\
  & $\text{BIC}_{\mbox{\tiny OS}}$ &  $1.613$ & $1.659$ & $1.745$ & $1.8$ \\
  \bottomrule
 \end{tabular}
 \label{tab:Data-2}
\end{table}
Table \ref{tab:Data-2} shows $p_\text{det}$ and MAE as a function of the number of data vectors per cluster, $N_k, k=1,\ldots,10$, where $N_k$ is allowed to take values from the set $\{100,200,500,1000\}$, for Data-2. Data-2 contains both spherical and elliptical clusters and there is an overlap between two clusters, while the rest of the clusters are well separated. The proposed criterion, $\text{BIC}_{\mbox{\tiny N}}$, consistently outperforms the cluster enumeration methods that are based on the original BIC for the specified number of data vectors per cluster $(N_k)$. $\text{BIC}_{\mbox{\tiny O}}$ tends to underestimate the number of clusters to $\hat{K}_{\text{BIC}_{\mbox{\tiny O}}}=9$ when $N_k$ is small, and it merges the two overlapping clusters. Even though majority of the clusters are spherical, $\text{BIC}_{\mbox{\tiny OS}}$ rarely finds the correct number of clusters. \\
\begin{figure}[t]
 \centering
 \begin{subfigure}[b]{\linewidth}
 \centering
  \includegraphics[width=\linewidth]{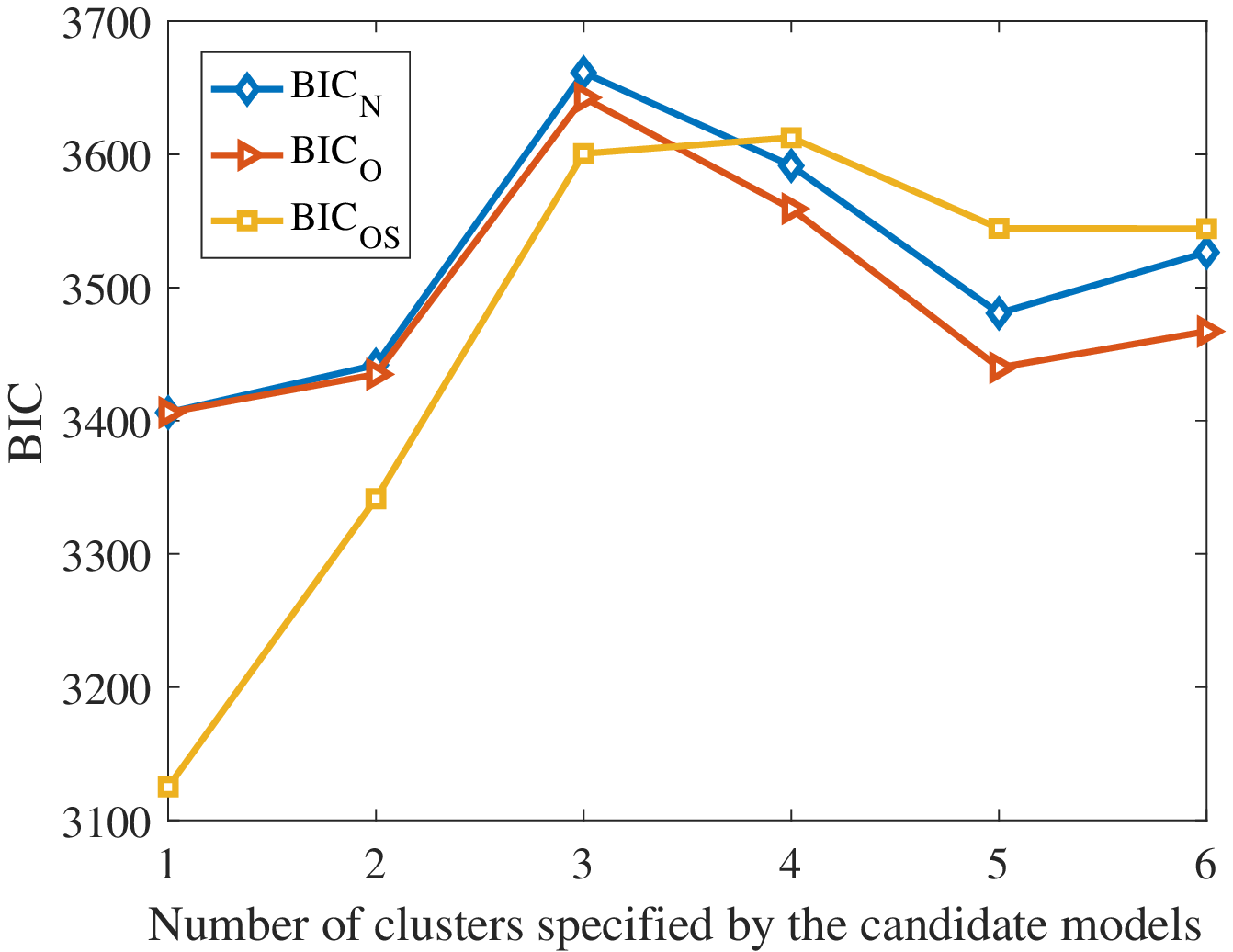}
  \caption{Total criteria}
  \label{fig:BIC}
 \end{subfigure}
 \begin{subfigure}[b]{\linewidth}
 \centering
  \includegraphics[width=\linewidth]{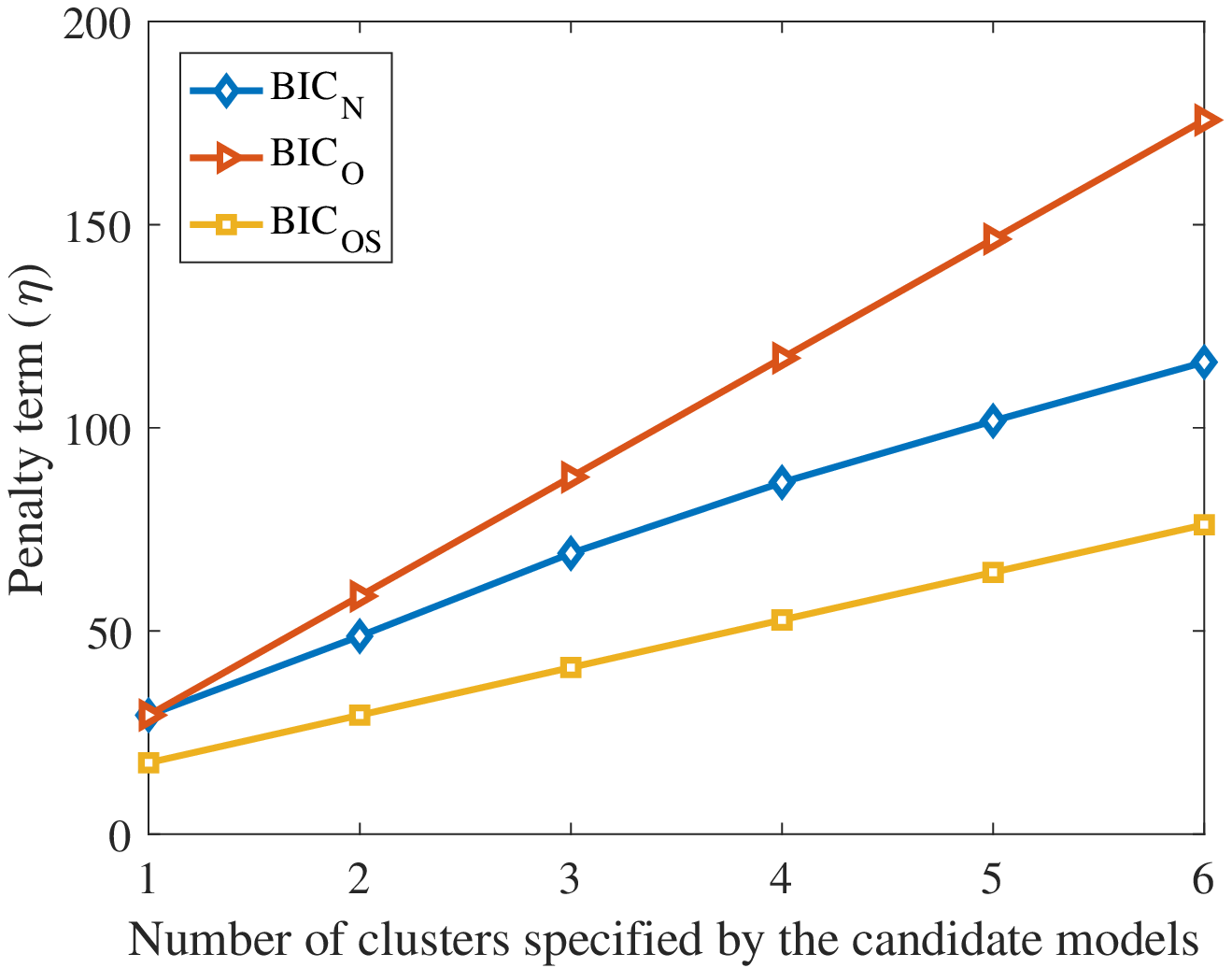} 
 \caption{Penalty terms}
  \label{fig:data2Pen}
 \end{subfigure}
 \caption{The BIC (a) and penalty term (b) of different Bayesian cluster enumeration criteria for Data-1 when $\gamma=1$.}
 \label{fig:data3_gen}
\end{figure}

\subsubsection{Initialization Strategies for Clustering Algorithms}
The overall performance of the two-step approach presented in Algorithm \ref{alg:BICG} depends on how well the clustering algorithm in the first step is able to partition the given data set. Clustering algorithms such as K-means and EM are known to converge to a local optimum and exhibit sensitivity to initialization of cluster parameters. The simplest initialization method is to randomly select cluster centroids from the set of data points. However, unless the random initializations are repeated sufficiently many times, the algorithms tend to converge to a poor local optimum. K-means++ \cite{arthur2007} attempts to solve this problem by providing a systematic initialization to K-means. One can also use a few runs of K-means++ to initialize the EM algorithm. An alternative approach to the initialization problem is to use random swap \cite{franti2018,zhao2012}. Unlike repeated random initializations, random swap creates random perturbations to the solutions of K-means and EM in an attempt to move  the clustering result away from an inferior local optimum. \\
We compare the performance of the proposed criterion and the original BIC as wrappers around the above discussed clustering methods using five synthetic data sets, which include Data-1 with $\gamma=6$, Data-2 with $N_k=500$, and the ones summarized in Table~\ref{tab:datasets}. The number of random swaps is set to $100$ and the results are an average of $100$ Monte Carlo experiments. To allow for a fair comparison, the number of replicates required by the clustering methods that use K-means++ initialization is set equal to the number of random swaps. \\
The empirical probability of detection $\left(p_{\text{det}}\right)$ of the proposed criterion and the original BIC as wrappers around the different clustering methods is depicted in Table~\ref{tab:synthresults}, where RSK-means is the random swap K-means and REM is the random swap EM. $\text{BIC}_{\mbox{\tiny NS}}$ is the implementation of the proposed BIC as a wrapper around the K-means variants and is given by 
\begin{align}
 \text{BIC}_{\mbox{\tiny NS}} &= \sum_{m=1}^lN_m\log N_m - \frac{Nr}{2}\log \hat{\sigma}^2 \nonumber \\
 &- \frac{\alpha}{2}\sum_{m=1}^l\log N_m,
 \label{eq:BICNS}
\end{align}
where $\alpha=r+1$ and $\hat{\sigma}^2$ is given by Eq.~\eqref{eq:sigma}. For the data sets that are mostly spherical, the K-means variants outperform the ones that are based on EM in terms of the correct estimation of the number of clusters, while, as expected, EM is superior for the elliptical data sets. Among the K-means variants, the gain obtained from using random swap instead of simple K-means++ is almost negligible. On the other hand, for the EM variants, EM significantly outperforms RSEM especially for $\text{BIC}_{\mbox{\tiny N}}$. 
\begin{table}[htb]
 \caption{Summary of synthetic data sets in terms of their number of features ($r$), number of samples ($N$), number of samples per cluster ($N_k$), and number of clusters ($K$).}
 \centering
 \begin{tabular}{ccccc}
  \toprule
  Data sets & $r$ & $N$ & $N_k$ &  $K$ \\
  \midrule
  S3 \cite{Ssets} & $2$& $5000$ & $333$ & $15$  \\
  A1 \cite{Asets} & $2$ & $3000$ & $150$ & $20$  \\
  G2-2-40 \cite{G2sets} & $2$ & $2048$ & $1024$ & $2$  \\
  \bottomrule
 \end{tabular}
 \label{tab:datasets}
\end{table}
\begin{table}[htb]
 \caption{Empirical probability of detection in \%.}
 \centering
 \scalebox{.97}{\begin{tabular}{cccccccc}
 \toprule
 & & Data-1 & Data-2 & S3 & A1 & G2-2-40   \\
 \midrule
 \multirow{2}{*}{K-means++ \cite{arthur2007}} & $\text{BIC}_{\mbox{\tiny NS}}$ & $49$ & $0$ & $\bm{100}$ & $98$ & $\bm{100}$ \\
 & $\text{BIC}_{\mbox{\tiny OS}}$ & $48$ & $0$ & $\bm{100}$ & $98$ & $\bm{100}$  \\
 \midrule
 \multirow{2}{*}{RSK-means \cite{franti2018}} & $\text{BIC}_{\mbox{\tiny NS}}$ & $49$ & $0$ & $\bm{100}$ & $\bm{100}$ & $\bm{100}$ \\
 & $\text{BIC}_{\mbox{\tiny OS}}$ & $48$ & $0$ & $\bm{100}$ & $\bm{100}$ & $\bm{100}$ \\
 \midrule
 \multirow{2}{*}{EM \cite{bishop2006}} & $\text{BIC}_{\mbox{\tiny N}}$ & $\bm{87}$ & $\bm{92}$ & $10$ &  $98$ & $\bm{100}$   \\
 & $\text{BIC}_{\mbox{\tiny O}}$ & $85$ & $89$ & $10$ & $98$ & $\bm{100}$  \\
 \midrule
 \multirow{2}{*}{RSEM \cite{zhao2012}} & $\text{BIC}_{\mbox{\tiny N}}$ & $22$ & $68$ & $11$ & $16$ & $90$  \\
 & $\text{BIC}_{\mbox{\tiny O}}$ & $85$ & $89$ & $9$ & $28$ & $97$ \\
  \bottomrule
 \end{tabular}}
 \label{tab:synthresults}
\end{table}

\subsection{Real Data Results}
\label{subsec:realdatares}
Although there is no randomness when repeating the experiments for the real data sets, we still use the empirical probabilities defined in Section \ref{subsec:perf} as performance measures because the cluster enumeration results vary depending on the initialization of the EM and K-means++ algorithm.
\subsubsection{Iris Data Set}
The Iris data set, also called Fisher's Iris data set \cite{fisher1936}, is a $4$-dimensional data set collected from three species of the Iris flower. It contains three clusters of 50 instances each, where each cluster corresponds to one species of the Iris flower \cite{lichman2013}. One cluster is linearly separable from the other two, while the remaining ones overlap. We have normalized the data set by dividing the features by their corresponding mean. \\ 
Fig.~\ref{fig:irisProb} shows the empirical probability of selection of different cluster enumeration criteria as a function of the number of clusters specified by the candidate models in $\mathcal{M}$. Our proposed criterion, $\text{BIC}_{\mbox{\tiny N}}$, is able to estimate the correct number of clusters $(K=3)$ $98.8\%$ of the time, while $\text{BIC}_{\mbox{\tiny O}}$ always underestimates the number of clusters to $\hat{K}_{\text{BIC}_{\mbox{\tiny O}}}=2$. $\text{BIC}_{\mbox{\tiny OS}}$ completely breaks down and, in most cases, goes for the specified maximum number of clusters. Even though two out of three clusters are not linearly separable, our proposed criterion is able to estimate the correct number of clusters with a very high empirical probability of detection. Fig.~\ref{fig:iris} shows the behavior of the BIC curves of the proposed criterion, $\text{BIC}_{\mbox{\tiny N}}$ given by Eq.~\eqref{eq:BICG}, and the original BIC implemented as a wrapper around the EM algorithm, $\text{BIC}_{\mbox{\tiny O}}$ given by Eq.~\eqref{eq:BICc}, for one Monte Carlo experiment. From Eq.~\eqref{eq:BICuni}, we know that the data-fidelity terms of both criteria are the same and this can be seen in Fig.~\ref{fig:dataIris}. But, their penalty terms are quite different, see Fig.~\ref{fig:penIris}. Due to the difference in the penalty terms of $\text{BIC}_{\mbox{\tiny N}}$ and $\text{BIC}_{\mbox{\tiny O}}$, we observe a different BIC curve in Fig.~\ref{fig:BICIris}. The total criterion (BIC) curve of $\text{BIC}_{\mbox{\tiny N}}$ has a maximum at the true number of clusters $(K=3)$, while $\text{BIC}_{\mbox{\tiny O}}$ has a maximum at $\hat{K}_{\text{BIC}_{\mbox{\tiny O}}}=2$. Observe that, again, the penalty term of our proposed criterion, $\text{BIC}_{\mbox{\tiny N}}$, has a curvature at the true number of clusters $K=3$. Just as in the simulated data experiments, the penalty term of our proposed criterion gives valuable information about the true number of clusters in the data set while the penalty terms of the other methods are uninformative on their own.
\begin{figure}[t]
 \centering
 \includegraphics[width=\linewidth]{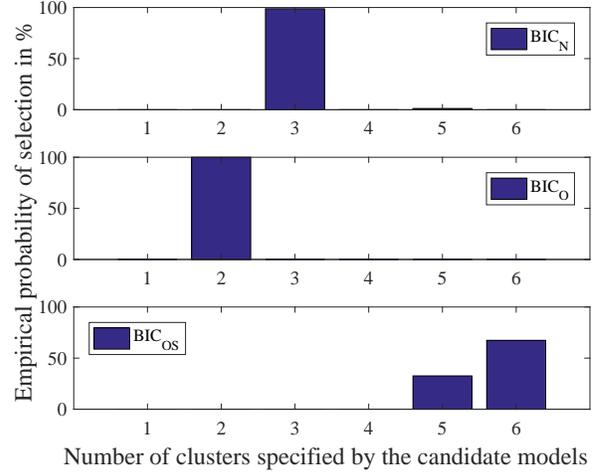} 
 \caption{Empirical probability of selection of our proposed criterion, $\text{BIC}_{\mbox{\tiny N}}$, and existing Bayesian cluster enumeration criteria for the Iris data set.} 
 \label{fig:irisProb}
\end{figure}
\begin{figure}[t]
 \centering
 \begin{subfigure}[t]{\linewidth}
 \centering
  \includegraphics[width=0.9\linewidth]{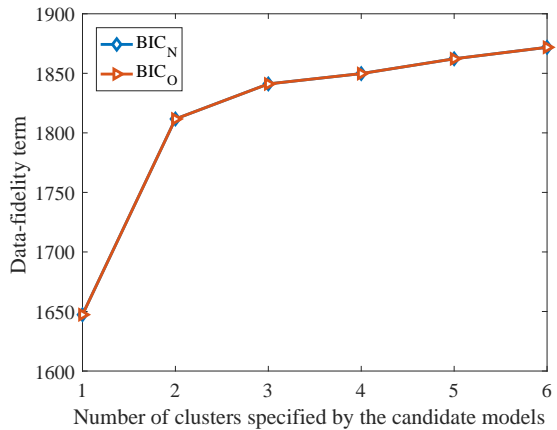}
  \caption{Data-fidelity terms}
  \label{fig:dataIris}
 \end{subfigure}
 \begin{subfigure}[t]{\linewidth}
 \centering
  \includegraphics[width=0.9\linewidth]{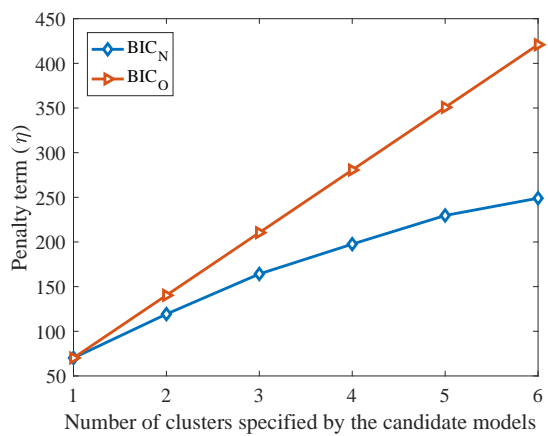}
  \caption{Penalty terms}
  \label{fig:penIris}
 \end{subfigure}
 \begin{subfigure}[t]{\linewidth}
 \centering
  \includegraphics[width=0.9\linewidth]{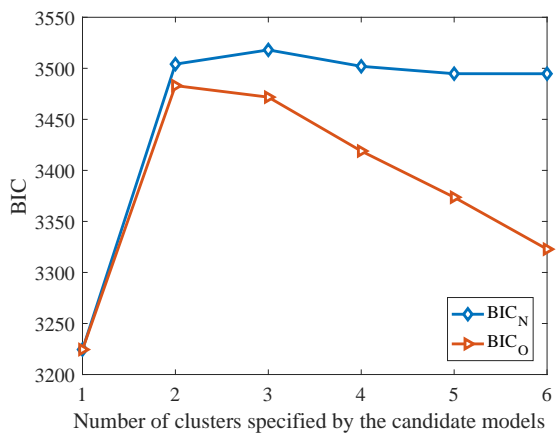}
  \caption{Total criteria}
  \label{fig:BICIris}
 \end{subfigure}
\caption{The data-fidelity term (a) penalty term (b) and the BIC (c) of the proposed criterion, $\text{BIC}_{\mbox{\tiny N}}$, and $\text{BIC}_{\mbox{\tiny O}}$ for the Iris data set.}
\label{fig:iris}
\end{figure}

\subsubsection{Multi-Object Multi-Camera Network Application}
The multi-object multi-camera network application \cite{teklehaymanot2016,binder2017} depicted in Fig.~\ref{fig:camnetwork} contains seven cameras that actively monitor a common scene of interest from different view points. There are six cars that enter and leave the scene of interest at different time frames. The video captured by each camera in the network is $18$ seconds long and $550$ frames are captured by each camera. Our objective is to estimate the total number of cars observed by the camera network. This multi-object multi-camera network example is a challenging scenario for cluster enumeration in the sense that each camera monitors the scene from different angles, which can result in differences in the extracted feature vectors (descriptors) of the same object. Furthermore, as shown in Fig.~\ref{fig:camnetwork}, the video that is captured by the cameras has a low resolution. \\
\begin{figure}[t]
 \centering
 \includegraphics[width=0.9\linewidth]{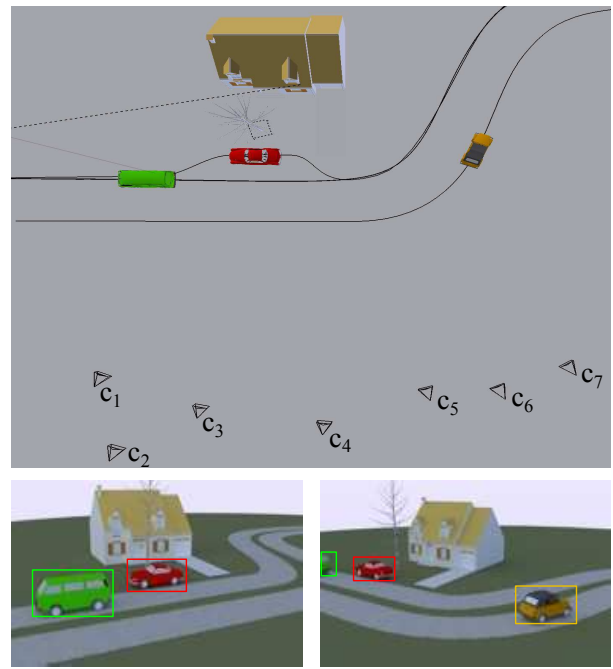}
 \caption{A wireless camera network continuously observing a common scene of interest. The top image depicts a camera network with $7$ spatially distributed cameras that actively monitor the scene from different viewpoints. The bottom left and right images show a frame captured by cameras $1$ and $7$, respectively, at the same time instant.}
 \label{fig:camnetwork}
\end{figure}
\noindent We consider a centralized network structure where the spatially distributed cameras send feature vectors to a central fusion center for further processing. Hence, each camera $c_i\in\mathcal{C}\triangleq\{c_1,\ldots,c_7\}$ first extracts the objects of interest, cars in this case, from the frames in the video using a Gaussian mixture model-based foreground detector. Then, SURF \cite{bay2008} and color features are extracted from the cars. A standard MATLAB\textsuperscript{\textcopyright} implementation of SURF is used to generate a $64$-dimensional feature vector for each detected object. Additionally, a $10$ bin histogram for each of the RGB color channels is extracted, resulting in a $30$-dimensional color feature vector. In our simulations, we apply Principal Component Analysis (PCA) to reduce the dimension of the color features to $15$. Each camera $c_i\in\mathcal{C}$ stores its feature vectors in $\mathcal{X}_{c_i}$. Finally, the feature vectors extracted by each camera, $\mathcal{X}_{c_i}$, are sent to the fusion center. At the fusion center, we have the total set of feature vectors $\mathcal{X}\triangleq\{\mathcal{X}_{c_1},\ldots,\mathcal{X}_{c_7}\}\subset\mathbb{R}^{79\times 5213}$ based on which the cluster enumeration is performed. \\   
The empirical probability of selection for different Bayesian cluster enumeration criteria as a function of the number of clusters specified by the candidate models in $\mathcal{M}$ is displayed in Fig.~\ref{fig:carProb}. Even though there are six cars in the scene of interest, two cars have similar colors. Our proposed criterion, $\text{BIC}_{\mbox{\tiny N}}$, finds six clusters only $14.7\%$ of the time, while the other cluster enumeration criteria are unable to find the correct number of clusters (cars). $\text{BIC}_{\mbox{\tiny N}}$ finds five clusters majority of the time. This is very reasonable due to the color similarity of the two cars, which results in the merging of their clusters. The original BIC, $\text{BIC}_{\mbox{\tiny O}}$, also finds five clusters majority of the time. But, it also tends to underestimate the number of clusters even more by detecting only four clusters. Hence, our proposed cluster enumeration criterion outperforms existing BIC-based methods in terms of MAE as shown in Table~\ref{tab:p+mse}, which summarizes the performance of different Bayesian cluster enumeration criteria on the real data sets. 
\begin{figure}[t]
 \centering
 \includegraphics[width=\linewidth]{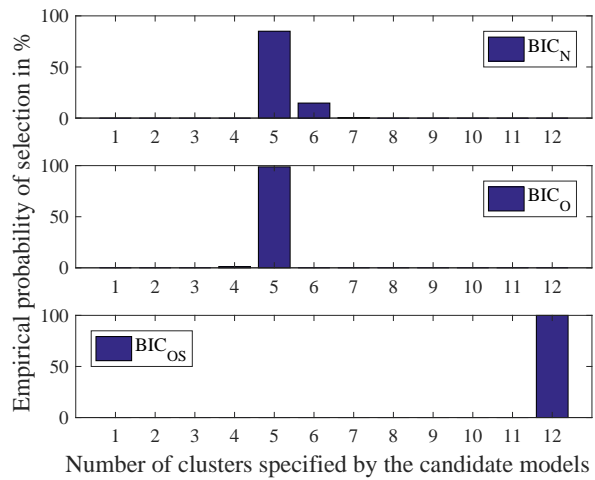} 
 \caption{Empirical probability of selection of our proposed criterion, $\text{BIC}_{\mbox{\tiny N}}$, and existing Bayesian cluster enumeration criteria for the multi-object multi-camera network application.} 
 \label{fig:carProb}
\end{figure}
\begin{table}[htb]
 \caption{Comparison of cluster enumeration performance of different Bayesian criteria for the real data sets. The performance metrics are the empirical probability of detection in $\%$, the empirical probability of underestimation in $\%$, and the Mean Absolute Error (MAE).}
 \centering
 \begin{tabular}{ccccc}
  \toprule
   & & Iris &  Cars & Seeds   \\
  \midrule
  \multirow{2}{*}{$p_{\text{det}}(\%)$} & $\text{BIC}_{\mbox{\tiny N}}$ & $\bm{98.8}$  & $\bm{14.7}$ & $\bm{100}$ \\
  & $\text{BIC}_{\mbox{\tiny O}}$ &  $0$  & $0$ & $\bm{100}$ \\
  & $\text{BIC}_{\mbox{\tiny OS}}$ &  $0$  & $0$ & $0$ \\
  \midrule
  \multirow{2}{*}{$p_{\text{under}}(\%)$} & $\text{BIC}_{\mbox{\tiny N}}$ &  $0$  & $85$ & $0$ \\
  & $\text{BIC}_{\mbox{\tiny O}}$ &  $100$    & $100$ & $0$ \\
  & $\text{BIC}_{\mbox{\tiny OS}}$ &  $0$  & $0$ & $0$ \\
  \midrule
  \multirow{2}{*}{$\text{MAE}$} & $\text{BIC}_{\mbox{\tiny N}}$   & $\bm{0.024}$   & $\bm{0.853}$ & $\bm{0}$  \\
  & $\text{BIC}_{\mbox{\tiny O}}$ &  $1$  & $1.012$ & $\bm{0}$ \\
  & $\text{BIC}_{\mbox{\tiny OS}}$ & $2.674$  & $6$  & $3$ \\
  \bottomrule
 \end{tabular}
 \label{tab:p+mse}
\end{table}

\subsubsection{Seeds Data Set} The Seeds data set is a $7$ dimensional data set which contains measurements of geometric properties of kernels belonging to three different varieties of wheat, where each variety is represented by $70$ instances \cite{lichman2013}. \\
As can be seen in Fig.~\ref{fig:seeds} the proposed criterion, $\text{BIC}_{\mbox{\tiny N}}$, and $\text{BIC}_{\mbox{\tiny O}}$ are able to estimate the correct number of clusters $100\%$ of the time while $\text{BIC}_{\mbox{\tiny OS}}$ overestimates the number of clusters to $\hat{K}_{\text{BIC}_{\mbox{\tiny OS}}}=6$. \\
In cases where either the maximum found from the BIC curve is very near to the maximum number of clusters specified by the candidate models or no clear maximum can be found, different post-processing steps that attempt to find a significant curvature in the BIC curve have been proposed in the literature. One such method is the knee point detection strategy \cite{zhao2008,zhao22008}. For the Seeds data set, applying the knee point detection method to the BIC curve generated by $\text{BIC}_{\mbox{\tiny OS}}$ allows for the correct estimation of the number of clusters $100\%$ of the time. \\
Once the number of clusters in an observed data set is estimated, the next step is to analyze the overall classification performance of the proposed two-step approach. An evaluation of the cluster enumeration and classification performance of the proposed algorithm using radar data of human gait can be found in \cite{tekleeusipco2018}. 
\begin{figure}[htb]
 \centering
 \includegraphics[width=\linewidth]{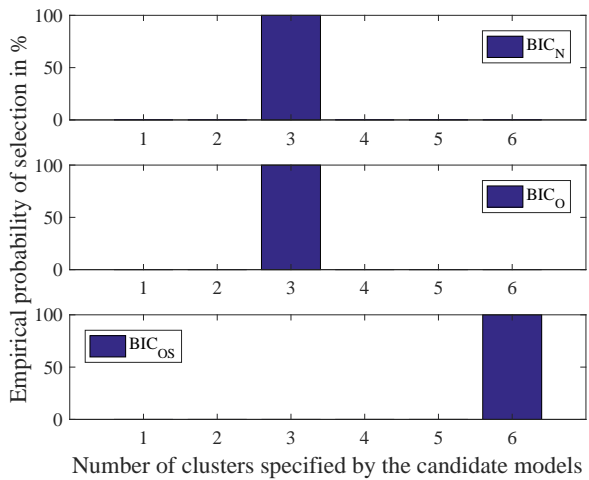}
 \caption{Empirical probability of selection of our proposed criterion, $\text{BIC}_{\mbox{\tiny N}}$, and existing Bayesian cluster enumeration criteria for the Seeds data set.}
 \label{fig:seeds}
\end{figure}

\section{Conclusion}
\label{sec:conc}
We have derived a general expression of the BIC for cluster analysis which is applicable to a broad class of data distributions. By imposing the multivariate Gaussian assumption on the distribution of the observations, we have provided a closed-form BIC expression. Further, we have presented a two-step cluster enumeration algorithm. The proposed criterion contains information about the structure of the data in both its data-fidelity and penalty terms because it is derived by taking the cluster analysis problem into account. Simulation results indicate that the penalty term of the proposed criterion has a curvature point at the true number of clusters which is created due to the change in the trend of the curve at that point. Hence, the penalty term of the proposed criterion can contain information about the true number of clusters in an observed data set. In contrast, the penalty terms of the existing BIC-based cluster enumeration methods are uninformative on their own. In a forthcoming paper, we will alleviate the Gaussian assumption and consider robust \cite{zoubir2012,zoubir2018} cluster enumeration in the presence of outliers.
\appendices
\section{Proof of Theorem 1}
\label{app:A}
To obtain an asymptotic approximation of the FIM $\hat{\bm{J}}_m$, we first express the log-likelihood of the data points that belong to the $m$th cluster as follows:
 \begin{align}
  \log \mathcal{L}(\bm{\theta}_m|\mathcal{X}_m) &= \log \prod_{\bm{x}_n\in\mathcal{X}_m} p(\bm{x}_n\in\mathcal{X}_m) f(\bm{x}_{n}|\bm{\theta}_m) \nonumber \\
  &= \sum_{\bm{x}_n\in\mathcal{X}_m}\log\biggl(\frac{N_m}{N}\frac{1}{(2\pi)^{\frac{r}{2}}\left|\bm{\Sigma}_m\right|^\frac{1}{2}} \nonumber \\
  & \times \exp\left(-\frac{1}{2}\tilde{\bm{x}}_{n}^\top\bm{\Sigma}_m^{-1}\tilde{\bm{x}}_{n}\right)\biggr) \nonumber \\
  & = \sum_{\bm{x}_n\in\mathcal{X}_m}\biggl(\log\frac{N_m}{N}-\frac{r}{2}\log 2\pi -\frac{1}{2}\log\left|\bm{\Sigma}_m\right|\biggr) \nonumber \\
  &  -\frac{1}{2}\tr\left(\sum_{\bm{x}_n\in\mathcal{X}_m}\tilde{\bm{x}}_{n}^\top\bm{\Sigma}_m^{-1}\tilde{\bm{x}}_{n}\right) \nonumber \\
  & = N_m\log\frac{N_m}{N} -\frac{rN_m}{2}\log 2\pi -\frac{N_m}{2}\log\left|\bm{\Sigma}_m\right| \nonumber \\
  &  -\frac{1}{2}\tr\left(\bm{\Sigma}_m^{-1}\sum_{\bm{x}_n\in\mathcal{X}_m}\tilde{\bm{x}}_{n}\tilde{\bm{x}}_{n}^\top\right) \nonumber \\
  & = N_m\log\frac{N_m}{N} -\frac{rN_m}{2}\log 2\pi -\frac{N_m}{2}\log\left|\bm{\Sigma}_m\right| \nonumber \\
  & -\frac{1}{2}\tr\left(\bm{\Sigma}_m^{-1}\bm{\Delta}_m\right),
  \label{eq:loglikeliGauss}
 \end{align}
where $\tilde{\bm{x}}_{n}\triangleq \bm{x}_{n}-\bm{\mu}_m$ and $\bm{\Delta}_m \triangleq \sum_{\bm{x}_n\in\mathcal{X}_m}\tilde{\bm{x}}_{n}\tilde{\bm{x}}_{n}^\top$. Here, we have assumed that
\begin{enumerate}
 \item[\textbf{(A.6)}] the covariance matrix $\bm{\Sigma}_m, m=1,\ldots,l,$ is positive definite. 
\end{enumerate}
Then, we take the first- and second-order derivatives of $\log \mathcal{L}(\bm{\theta}_m|\mathcal{X}_m)$ with respect to the elements of $\bm{\theta}_m=\left[\bm{\mu}_m,\bm{\Sigma}_m\right]^\top$. To make the paper self contained, we have included the vector and matrix differentiation rules used in Eqs.~\eqref{eq:firstderGauss1}-\eqref{eq:secondderGauss2}, and \eqref{eq:secondderGauss3} in Appendix~\ref{app:B} (see \cite{magnus2007} for details). A generic expression of the first-order derivative of $\log \mathcal{L}(\bm{\theta}_m|\mathcal{X}_m)$ with respect to $\bm{\theta}_m$ is given by
\begin{align}
 \frac{d\log \mathcal{L}(\bm{\theta}_m|\mathcal{X}_m)}{d \bm{\theta}_m} &= -\frac{N_m}{2}\tr\left(\bm{\Sigma}_m^{-1}\frac{d\bm{\Sigma}_m}{d\bm{\theta}_m}\right) \nonumber \\
 & + \frac{1}{2}\tr\left(\bm{\Sigma}_m^{-1}\frac{d\bm{\Sigma}_m}{d\bm{\theta}_m}\bm{\Sigma}_m^{-1}\bm{\Delta}_m\right) \nonumber \\
 & + \tr\left(\bm{\Sigma}_m^{-1}\sum_{\bm{x}_n\in\mathcal{X}_m}\tilde{\bm{x}}_{n}\frac{d\bm{\mu}_m^\top}{d\bm{\theta}_m} \right) \nonumber \\
 & = \frac{1}{2}\tr\left(\frac{d\bm{\Sigma}_m}{d\bm{\theta}_m}\bm{\Sigma}_m^{-1}\bm{E}_m\bm{\Sigma}_m^{-1} \right) \nonumber \\
 & + N_m\frac{d\bm{\mu}_m^\top}{d\bm{\theta}_m}\bm{\Sigma}_m^{-1}(\bar{\bm{x}}_{m}-\bm{\mu}_m),
 \label{eq:firstderGauss1}
\end{align}
where $\bar{\bm{x}}_{m}\triangleq \frac{1}{N_m}\sum_{\bm{x}_n\in\mathcal{X}_m} \bm{x}_{n}$ is the sample mean of the data points that belong to the $m$th cluster and $\bm{E}_m\triangleq\bm{\Delta}_m-N_m\bm{\Sigma}_m$. Differentiating Eq.~\eqref{eq:firstderGauss1} with respect to $\bm{\theta}_m^\top$ results in
\begin{align}
 \frac{d^2\log \mathcal{L}(\bm{\theta}_m|\mathcal{X}_m)}{d \bm{\theta}_md\bm{\theta}_m^\top} &= \frac{1}{2}\tr\left(\frac{d\bm{\Sigma}_m}{d\bm{\theta}_m}\frac{d\bm{\Sigma}_m^{-1}}{d\bm{\theta}_m^\top}\bm{E}_m\bm{\Sigma}_m^{-1}\right) \nonumber \\
 & + \frac{1}{2}\tr\left(\frac{d\bm{\Sigma}_m}{d\bm{\theta}_m}\bm{\Sigma}_m^{-1}\bm{E}_m\frac{d\bm{\Sigma}_m^{-1}}{d\bm{\theta}_m^\top} \right) \nonumber \\
 & + \frac{1}{2}\tr\left(\frac{d\bm{\Sigma}_m}{d\bm{\theta}_m}\bm{\Sigma}_m^{-1}\frac{d\bm{E}_m}{d\bm{\theta}_m^\top}\bm{\Sigma}_m^{-1} \right) \nonumber \\
 & + N_m\frac{d\bm{\mu}_m^\top}{d\bm{\theta}_m}\frac{d\bm{\Sigma}_m^{-1}}{d\bm{\theta}_m^\top}(\bar{\bm{x}}_{m}-\bm{\mu}_m) \nonumber \\
 & - N_m\frac{d\bm{\mu}_m^\top}{d\bm{\theta}_m}\bm{\Sigma}_m^{-1}\frac{d\bm{\mu}_m}{d\bm{\theta}_m^\top} \nonumber \\
 &= \frac{N_m}{2}\tr\left(\frac{d\bm{\Sigma}_m}{d\bm{\theta}_m}\bm{\Sigma}_m^{-1}\frac{d\bm{\Sigma}_m}{d\bm{\theta}_m^\top}\bm{\Sigma}_m^{-1} \right) \nonumber \\
 & -\tr\left(\frac{d\bm{\Sigma}_m}{d\bm{\theta}_m}\bm{\Sigma}_m^{-1}\frac{d\bm{\Sigma}_m}{d\bm{\theta}_m^\top}\bm{\Sigma}_m^{-1}\bm{\Delta}_m\bm{\Sigma}_m^{-1}\right) \nonumber \\
 & - \!N_m\tr\!\left(\!\frac{d\bm{\Sigma}_m}{d\bm{\theta}_m}\bm{\Sigma}_m^{-1}(\bar{\bm{x}}_{m}\!-\!\bm{\mu}_m\!)\!\frac{d\bm{\mu}_m^\top}{d\bm{\theta}_m^\top}\bm{\Sigma}_m^{-1}\!\right) \nonumber \\
 & - N_m\frac{d\bm{\mu}_m^\top}{d\bm{\theta}_m}\bm{\Sigma}_m^{-1}\frac{d\bm{\mu}_m}{d\bm{\theta}_m^\top}.
 \label{eq:secondderGauss1}
\end{align}
Next, we exploit the symmetry of the covariance matrix $\bm{\Sigma}_m$ to come up with a final expression for the second-order derivative of $\log \mathcal{L}(\bm{\theta}_m|\mathcal{X}_m)$. The unique elements of $\bm{\Sigma}_m$ can be collected into a vector $\bm{u}_m\in\mathbb{R}^{\frac{1}{2}r(r+1)\times 1}$ as defined in \cite[pp.~56--57]{magnus2007}. Hence, incorporating the symmetry of the covariance matrix $\bm{\Sigma}_m$ and replacing the parameter vector $\bm{\theta}_m$ by $\check{\bm{\theta}}_m=\left[\bm{\mu}_m,\bm{u}_m\right]^\top$ in Eq.~\eqref{eq:secondderGauss1} results in the following expression: 
\begin{align}
 \frac{d^2\log \mathcal{L}(\bm{\theta}_m|\mathcal{X}_m)}{d \check{\bm{\theta}}_md\check{\bm{\theta}}_m^\top} &= \frac{N_m}{2}\text{vec}\left(\frac{d\bm{\Sigma}_m}{d\check{\bm{\theta}}_m}\right)^\top\bm{V}_m\text{vec}\left(\frac{d\bm{\Sigma}_m}{d\check{\bm{\theta}}_m^\top}\right) \nonumber \\
 & - \text{vec}\left(\frac{d\bm{\Sigma}_m}{d\check{\bm{\theta}}_m^\top}\right)^\top\bm{W}_m\text{vec}\left(\frac{d\bm{\Sigma}_m}{d\check{\bm{\theta}}_m}\right) \nonumber \\
 & -N_m\text{vec}\left(\frac{d\bm{\Sigma}_m}{d\check{\bm{\theta}}_m}\right)^\top\bm{Z}_m\text{vec}\left(\frac{d\bm{\mu}_m^\top}{d\check{\bm{\theta}}_m^\top}\right) \nonumber \\
 & - N_m\frac{d\bm{\mu}_m^\top}{d\check{\bm{\theta}}_m}\bm{\Sigma}_m^{-1}\frac{d\bm{\mu}_m}{d\check{\bm{\theta}}_m^\top},
\label{eq:secondderGauss2}
 \end{align}
where 
\begin{align}
 \bm{V}_m &\triangleq\bm{\Sigma}_m^{-1}\otimes\bm{\Sigma}_m^{-1} \in \mathbb{R}^{r^2\times r^2} \\
 \bm{W}_m &\triangleq\bm{\Sigma}_m^{-1}\otimes\bm{\Sigma}_m^{-1}\bm{\Delta}_m\bm{\Sigma}_m^{-1} \in \mathbb{R}^{r^2\times r^2} \\
 \bm{Z}_m &\triangleq\bm{\Sigma}_m^{-1}\left(\bar{\bm{x}}_{m}-\bm{\mu}_m\right)\otimes\bm{\Sigma}_m^{-1} \in \mathbb{R}^{r^2\times r}.
\end{align}
Eq.~\eqref{eq:secondderGauss2} can be further simplified into
\begin{align}
 \frac{d^2\log \mathcal{L}(\bm{\theta}_m|\mathcal{X}_m)}{d \check{\bm{\theta}}_md\check{\bm{\theta}}_m^\top} &= \frac{N_m}{2}\left(\frac{d\bm{u}_m}{d\bm{u}_m}\right)^\top\bm{D}^\top\bm{V}_m\bm{D}\frac{d\bm{u}_m}{d\bm{u}_m^\top} \nonumber \\
 & - \left(\frac{d\bm{u}_m}{d\bm{u}_m^\top}\right)^\top\bm{D}^\top\bm{W}_m\bm{D}\frac{d\bm{u}_m}{d\bm{u}_m} \nonumber \\
 & -N_m\left(\frac{d\bm{u}_m}{d\bm{u}_m}\right)^\top\bm{D}^\top\bm{Z}_m\text{vec}\left(\frac{d\bm{\mu}_m^\top}{d\bm{\mu}_m^\top}\right) \nonumber \\
 & - N_m\frac{d\bm{\mu}_m^\top}{d\bm{\mu}_m}\bm{\Sigma}_m^{-1}\frac{d\bm{\mu}_m}{d\bm{\mu}_m^\top},
\end{align}
where $\bm{D}\in\mathbb{R}^{r^2\times \frac{1}{2}r(r+1)}$ denotes the duplication matrix. For the symmetric matrix $\bm{\Sigma}_m$, the duplication matrix transforms $\bm{u}_m$ into $\text{vec}(\bm{\Sigma}_m)$ using the relation $\text{vec}(\bm{\Sigma}_m)=\bm{D}\bm{u}_m$ \cite[pp.~56--57]{magnus2007}. \\
A compact matrix representation of the second-order derivative of $\log\mathcal{L}(\bm{\theta}_m|\mathcal{X}_m)$ is given by
\begin{equation}
 \frac{d^2\log \mathcal{L}(\bm{\theta}_m|\mathcal{X}_m)}{d \check{\bm{\theta}}_md\check{\bm{\theta}}_m^\top} = 
 \begin{bmatrix}
  \frac{\partial^2\log \mathcal{L}(\bm{\theta}_m|\mathcal{X}_m)}{\partial \bm{\mu}_m\partial \bm{\mu}_m^\top} & \frac{\partial^2\log \mathcal{L}(\bm{\theta}_m|\mathcal{X}_m)}{\partial \bm{\mu}_m\partial \bm{u}_m^\top} \\
  \frac{\partial^2\log \mathcal{L}(\bm{\theta}_m|\mathcal{X}_m)}{\partial \bm{u}_m\partial \bm{\mu}_m^\top} & \frac{\partial^2\log \mathcal{L}(\bm{\theta}_m|\mathcal{X}_m)}{\partial \bm{u}_m\partial\bm{u}_m^\top}
 \end{bmatrix}.
\end{equation}
The individual elements of the above matrix can be written as
 \begin{align}
  \frac{\partial^2\log \mathcal{L}(\bm{\theta}_m|\mathcal{X}_m)}{\partial \bm{\mu}_m\partial \bm{\mu}_m^\top} &= -N_m\bm{\Sigma}_m^{-1}  \\
  \frac{\partial^2\log \mathcal{L}(\bm{\theta}_m|\mathcal{X}_m)}{\partial \bm{\mu}_m\partial \bm{u}_m^\top} &=-N_m\bm{Z}_m^\top\bm{D}  \\
  \frac{\partial^2\log \mathcal{L}(\bm{\theta}_m|\mathcal{X}_m)}{\partial \bm{u}_m\partial \bm{\mu}_m^\top} &= -N_m\bm{D}^\top\bm{Z}_m  \\
  \frac{\partial^2\log \mathcal{L}(\bm{\theta}_m|\mathcal{X}_m)}{\partial \bm{u}_m\partial\bm{u}_m^\top} &= \frac{N_m}{2}\bm{D}^\top\bm{F}_m\bm{D},
 \label{eq:secondderGauss3}
 \end{align}
where $\bm{F}_m\triangleq\bm{\Sigma}_m^{-1}\otimes\left(\bm{\Sigma}_m^{-1}-\frac{2}{N_m}\bm{\Sigma}_m^{-1}\bm{\Delta}_m\bm{\Sigma}_m^{-1}\right) \in\mathbb{R}^{r^2\times r^2}$. \\
The FIM of the $m$th cluster can be written as
\begin{align}
  \hat{\bm{J}}_m &= 
 \begin{bmatrix}
  -\frac{\partial^2\log \mathcal{L}(\hat{\bm{\theta}}_m|\mathcal{X}_m)}{\partial \bm{\mu}_m\partial \bm{\mu}_m^\top} & -\frac{\partial^2\log \mathcal{L}(\hat{\bm{\theta}}_m|\mathcal{X}_m)}{\partial \bm{\mu}_m\partial \bm{u}_m^\top} \\
  -\frac{\partial^2\log \mathcal{L}(\hat{\bm{\theta}}_m|\mathcal{X}_m)}{\partial \bm{u}_m\partial \bm{\mu}_m^\top} & -\frac{\partial^2\log \mathcal{L}(\hat{\bm{\theta}}_m|\mathcal{X}_m)}{\partial \bm{u}_m\partial\bm{u}_m^\top}
 \end{bmatrix} \nonumber \\
 &= \begin{bmatrix}
     N_m\hat{\bm{\Sigma}}_m^{-1} & N_m\hat{\bm{Z}}_m^\top\bm{D} \\
     N_m\bm{D}^\top\hat{\bm{Z}}_m & -\frac{N_m}{2}\bm{D}^\top\hat{\bm{F}}_m\bm{D}
    \end{bmatrix}.
\label{eq:j2}
\end{align}
The maximum likelihood estimators of the mean and covariance matrix of the $m$th Gaussian cluster are given by
\begin{align}
 \hat{\bm{\mu}}_m &= \frac{1}{N_m}\sum_{\bm{x}\in\mathcal{X}_m}\bm{x}_{n} = \bar{\bm{x}}_{m} \\
  \hat{\bm{\Sigma}}_m &= \frac{1}{N_m}\sum_{\bm{x}_n\in\mathcal{X}_m}\left(\bm{x}_{n}-\hat{\bm{\mu}}_m\right)\left(\bm{x}_{n}-\hat{\bm{\mu}}_m\right)^\top.
\end{align}
Hence, $\hat{\bm{Z}}_m \triangleq\hat{\bm{\Sigma}}_m^{-1}\left(\bar{\bm{x}}_{m}-\hat{\bm{\mu}}_m\right)\otimes\hat{\bm{\Sigma}}_m^{-1} = \bm{0}_{r^2\times r}$. Consequently, Eq.~\eqref{eq:j2} can be further simplified to
\begin{equation}
\hat{\bm{J}}_m = 
 \begin{bmatrix}
     N_m\hat{\bm{\Sigma}}_m^{-1} & \bm{0}_{r\times \frac{1}{2}r(r+1)} \\
     \bm{0}_{\frac{1}{2}r(r+1)\times r} & -\frac{N_m}{2}\bm{D}^\top\hat{\bm{F}}_m\bm{D}
 \end{bmatrix}.
\end{equation}
The determinant of the FIM, $\hat{\bm{J}}_m$, can be written as
\begin{equation}
 \left|\hat{\bm{J}}_m\right| = \left|N_m\hat{\bm{\Sigma}}_m^{-1}\right|\times \left|-\frac{N_m}{2}\bm{D}^\top\hat{\bm{F}}_m\bm{D}\right|. 
\end{equation}
As $N\rightarrow\infty$, $N_m\rightarrow\infty$ given that $l<<N$, it follows that
\begin{equation}
 \left|\frac{1}{N_m}\hat{\bm{J}}_m\right| \approx \mathcal{O}(1),
\label{eq:fisherinfo2}
\end{equation}
where $\mathcal{O}(1)$ denotes Landau's term which tends to a constant as $N\rightarrow\infty$. Using this result, we provide an asymptotic approximation to Eq.~\eqref{eq:BICgen}, in the case where $\mathcal{X}$ is composed of Gaussian distributed data vectors, as follows:
\begin{align}
 \log p(M_l|\mathcal{X}) &\approx \log p(M_l) + \log f(\hat{\bm{\Theta}}_l|M_l) + \log \mathcal{L}(\hat{\bm{\Theta}}_l|\mathcal{X}) \nonumber \\
 & + \frac{lq}{2}\log 2\pi - \frac{1}{2}\sum_{m=1}^l\log \left|N_m\frac{1}{N_m}\hat{\bm{J}}_m\right| + \rho \nonumber \\
 & = \log p(M_l) + \log f(\hat{\bm{\Theta}}_l|M_l) + \log \mathcal{L}(\hat{\bm{\Theta}}_l|\mathcal{X}) \nonumber \\
 & + \frac{lq}{2}\log 2\pi - \frac{q}{2}\sum_{m=1}^l\log N_m \nonumber \\
 & - \frac{1}{2}\sum_{m=1}^l\log \left|\frac{1}{N_m}\hat{\bm{J}}_m\right| + \rho,
\label{eq:gaussianBIC}
\end{align}
where $q=\frac{1}{2}r(r+3)$. Assume that
\begin{enumerate}
 \item[\textbf{(A.7)}] $p(M_l)$ and $f(\hat{\bm{\Theta}}_l|M_l)$ are independent of the data length $N$.
\end{enumerate}
Then, ignoring the terms in Eq.~\eqref{eq:gaussianBIC} that do not grow as $N\rightarrow\infty$ results in
\begin{align}
 \text{BIC}_{\mbox{\tiny N}}(M_l) &\triangleq \log p(M_l|\mathcal{X}) \nonumber \\
 & \approx \log \mathcal{L}(\hat{\bm{\Theta}}_l|\mathcal{X}) - \frac{q}{2}\sum_{m=1}^l\log N_m + \rho.
\end{align}
Since $\mathcal{X}$ is composed of multivariate Gaussian distributed data, $\text{BIC}_{\mbox{\tiny N}}(M_l)$ can be further simplified as follows:
\begin{align}
 \text{BIC}_{\mbox{\tiny N}}(M_l) &\!=\! \log \mathcal{L}(\hat{\bm{\Theta}}_l|\mathcal{X}) + p_l \nonumber \\
 &= \sum_{m=1}^l\biggl(N_m\log\frac{N_m}{N}-\frac{rN_m}{2}\log2\pi\nonumber \\
 & -\frac{N_m}{2}\log\left|\hat{\bm{\Sigma}}_m\right| -\frac{1}{2}\tr\left(N_m\hat{\bm{\Sigma}}_m^{-1}\hat{\bm{\Sigma}}_m\right)\biggr) + p_l \nonumber \\
 &= \sum_{m=1}^l N_m\log N_m - N\log N - \frac{rN}{2}\log 2\pi \nonumber \\ 
 & - \sum_{m=1}^l\frac{N_m}{2}\log \left|\hat{\bm{\Sigma}}_m\right| - \frac{rN}{2} + p_l,
\label{eq:normalBIC}
 \end{align}
where 
\begin{equation}
 p_l\triangleq - \frac{q}{2}\sum_{m=1}^l\log N_m + \rho.
\end{equation}
Finally, ignoring the model independent terms in Eq.~\eqref{eq:normalBIC} results in Eq.~\eqref{eq:BICG} which concludes the proof. 

\section{Vector and Matrix Differentiation Rules}
\label{app:B}
Here, we describe the vector and matrix differentiation rules used in this paper (see \cite{magnus2007} for details). Let $\bm{\mu}\in\mathbb{R}^{r\times 1}$ be the mean and $\bm{\Sigma}\in\mathbb{R}^{r\times r}$ be the covariance matrix of a multivariate Gaussian random variable $\bm{x}$. Assuming that the covariance matrix $\bm{\Sigma}$ has no special structure, the following vector and matrix differentiation rules hold.
\begin{align}
\frac{d}{d\bm{\Sigma}}\log|\bm{\Sigma}| &= \tr\left(\bm{\Sigma}^{-1}\frac{d\bm{\Sigma}}{d\bm{\Sigma}}\right) \\
\frac{d}{d\bm{\Sigma}}\tr\left(\bm{\Sigma}\right) &= \tr\left(\frac{d\bm{\Sigma}}{d\bm{\Sigma}}\right) \\
\frac{d}{d\bm{\Sigma}}\bm{\Sigma}^{-1} &= -\bm{\Sigma}^{-1}\frac{d\bm{\Sigma}}{d\bm{\Sigma}}\bm{\Sigma}^{-1} \\
\frac{d}{d\bm{\mu}}\bm{\mu}^\top\bm{\mu} &= 2\bm{\mu}^\top
\end{align}
Given three arbitrary symmetric matrices $\bm{A}$, $\bm{B}$, and $\bm{Y}$ with matching dimensions
\begin{align}
 \tr\left(\bm{A}\bm{Y}\bm{B}\bm{Y}\right) &= \text{vec}(\bm{Y})^\top(\bm{A}\otimes\bm{B})\text{vec}(\bm{Y}) \\
 &= \bm{u}^\top\bm{D}^\top(\bm{A}\otimes\bm{B})\bm{D}\bm{u},
\end{align}
where $\bm{u}$ contains the unique elements of the symmetric matrix $\bm{Y}$ and $\bm{D}$ denotes the duplication matrix of $\bm{Y}$. In Eq.~\eqref{eq:secondderGauss3} we used the relation $\text{vec}\left(\frac{d\bm{Y}}{d\bm{u}}\right) = \bm{D}\frac{d\bm{u}}{d\bm{u}}$.

\section*{Acknowledgment}
We thank the anonymous reviewers for their insightful comments and suggestions. Further, we would like to thank Dr. Benjamin B\'{e}jar Haro for providing us with the multi-object multi-camera data set which was created as a benchmark with in the project HANDiCAMS. HANDiCAMS acknowledges the financial support of the Future and Emerging Technologies (FET) programme within the Seventh Framework Programme for Research of the European Commission, under FET-Open grant number: 323944.  The work of F. K. Teklehaymanot is supported by the `Excellence Initiative' of the German Federal and State Governments and the Graduate School of Computational Engineering at Technische Universit\"at Darmstadt and by the LOEWE initiative (Hessen, Germany) within the NICER project. The work of M. Muma is supported by the `Athene Young Investigator Programme' of Technische Universit\"at Darmstadt.

\bibliographystyle{IEEEtran}
\bibliography{IEEEabrv,myReferences}

\end{document}